\documentclass[reqno,11pt,oneside]{amsart}
\usepackage{amsmath,amsthm,amssymb,amsfonts,epsfig,color,graphicx,enumerate,accents,subfigure,mathtools}
\usepackage{hyperref}
\usepackage[a4paper,textwidth = 12cm]{geometry}
\usepackage{cleveref}
\usepackage{todonotes}
\usepackage{microtype}
\usepackage{lineno}
\usepackage[english]{babel}
\usepackage[utf8]{inputenc}
\usepackage[T1]{fontenc}

\newtheorem{thr}{Theorem}[section]
\newtheorem{prop}[thr]{Proposition}
\newtheorem{lemma}[thr]{Lemma}
\newtheorem{cor}[thr]{Corollary}

\theoremstyle{remark}

\theoremstyle{definition}

\theoremstyle{plain}
\newtheorem*{ass*}{\textbf{\textup{(A)}}}

\renewcommand{\det}{\operatorname{det}}

\renewcommand{\P}{\mathcal{P}}
\renewcommand{\d}{\, \text{d}}

\newcommand{\R}{\mathbb{R}}

\newcommand{\spt}{\operatorname{spt}}

\title[Sharp convergence for porous medium stationary states]
{Sharp Convergence Rate for stationary states
of the Porous Medium Equation}

\author{Igor Pinheiro}
\address{Dipartimento di Matematica e Informatica ``Ulisse Dini" \\ Università degli Studi di Firenze}
\email{igorvinicius.pereirapinheiro@unifi.it}

\author{Filippo Santambrogio}
\address{Universit\'e Lyon 1, Ecole Centrale de Lyon, INSA Lyon, Universit\'e Jean Monnet, CNRS, ICJ, UMR 5208, Villeurbanne, France}
\email{santambrogio@math.univ-lyon1.fr}

\date{\today}

\begin{document}

\begin{abstract}
In this paper we prove the sharp rate of convergence for stationary states of the porous medium equation with an external drift in the p-Wasserstein metric under very mild assumptions on the potential and for any $1\leq p\leq +\infty$.
\end{abstract}

\maketitle

\medskip
\textbf{Keywords:} Porous Medium Equation, Wasserstein metric, Optimal Transport, Convergence Rate. 

\section{Introduction}

We consider the porous medium equation with drift \[ \partial_t\rho=\Delta\rho^m+\nabla\cdot(\rho\nabla V), \qquad m>1, \] where $V:\R^d\to\R$ is a prescribed confining potential. This equation describes the evolution of a density under nonlinear diffusion and the drift induced by $-\nabla V$, and is related to models of crowd motion and tissue growth. Under suitable assumptions on $V$, it can be viewed as the gradient flow in the $2$-Wasserstein distance of the energy \[ F_m(\rho)\coloneq\frac{1}{m-1}\int_{\R^d}\rho^m\,\d x+\int_{\R^d}V\rho\,\d x. \] This interpretation belongs to the framework initiated by Jordan, Kinderlehrer and Otto \cite{JKO98} and developed by Ambrosio, Gigli and Savar\'e \cite{AGS05}.

As $m\to+\infty$, the internal energy increasingly penalizes densities larger than one, leading to the constrained functional \[ F_\infty(\rho)\coloneq\begin{cases}
\displaystyle\int_{\R^d}V\rho\,\d x,&\text{if }0\leq\rho\leq1\text{ a.e.},\\
+\infty,&\text{otherwise}.
\end{cases} \] Here the energies are defined on probability densities and extended by $+\infty$ to measures without a density. The gradient flow of $F_\infty$ has been used in modeling congested crowd motion, where the preferred velocity $-\nabla V$ is modified to preserve the constraint $\rho\leq1$ \cite{MRS10,MRSV11}. The passage from the porous medium equation to this constrained evolution is known as the incompressible or Hele--Shaw limit.

The connection between these models was made rigorous by Alexander, Kim and Yao \cite{AKY14}, who established convergence in $W_2$ and its relation to a Hele--Shaw free boundary problem. Quantitative estimates were also studied for related aggregation models in \cite{CKY18} and in the homogeneous $H^{-1}$-norm in \cite{DDP22}. We refer the reader to the introduction of \cite{DMS24} for a detailed account of these developments and further references.

David, M\'esz\'aros and the second author \cite[Theorem 2.4]{DMS24} obtained an improved convergence rate of order $m^{-1/2}$ in $W_2$. For the equation considered here, when $V$ has bounded Hessian and is uniformly convex, their estimate holds globally in time: \[ \sup_{t\geq0}W_2(\rho_m(t),\rho_\infty(t))\leq\frac{C}{\sqrt m}, \] for solutions starting from a common initial datum $\rho_0\in\P_2(\R^d)$ with $0\leq\rho_0\leq1$. The uniformity in time follows from the contractivity of the gradient flows in the $2$-Wasserstein distance.

The same paper also examines stationary states, which are the minimizers of $F_m$ and $F_\infty$. Besides proving an $L^1$ convergence rate of order $m^{-1}$, the authors obtain faster stationary $W_2$ rates under the additional assumption that $\spt(\rho_m)\subseteq\spt(\rho_\infty)$ for sufficiently large $m$ \cite[Theorems 2.6 and 2.7]{DMS24}. Our interest is in determining the sharp convergence rate for these stationary states. 

The following Theorem is our main result.
\begin{thr}\label{thr:main} Let $V \in C^2(\R^d)$ be a coercive function, $V(0) = 0$ and $0$ be the unique critical point of $V$. Define, for $m\geq2$, $\rho_m(x) = \left( \frac{m-1}{m}(C_m - V(x))_+ \right)^{\frac{1}{m-1}}$ and $\rho_\infty(x) = \chi_{\{V\leq C_\infty\}}$ where $C_m$ and $C_\infty$ are chosen so that $\rho_m\d x$ and $\rho_\infty \d x$ are probability measures. Then, for sufficiently large $m$ and any $p\in [1,+\infty]$, the following holds
\begin{equation}\label{eq:main-result}
 \frac{1}{m-1} \lesssim W_p(\rho_m,\rho_\infty ) \lesssim \frac{1}{m-1}.
\end{equation}
\end{thr}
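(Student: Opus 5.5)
Since $W_1\le W_p\le W_\infty$ for every $p\in[1,+\infty]$, it suffices to prove the lower bound for $W_1$ and the upper bound for $W_\infty$. Write $\varepsilon=\frac{1}{m-1}$ and $A_m=\frac{m-1}{m}=\frac{1}{1+\varepsilon}$, so that $\rho_m=(A_m(C_m-V)_+)^{\varepsilon}$.

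\textbf{Step 1: asymptotics of $C_m$.} Since $s^\varepsilon=1+\varepsilon\log s+O(\varepsilon^2\log^2 s)$, with the correction controlled wherever $s$ is not too close to $0$, we get $\rho_m\approx 1+\varepsilon\log(A_m(C_m-V))$ on $\{V<C_m\}$. The mass constraint $\int\rho_m=1=|\{V\le C_\infty\}|$ is then expected to give
\[
C_m=C_\infty+O(\varepsilon).
\]
The region near $\{V=C_m\}$, where $\rho_m$ drops to zero, is handled through the integrability of $\log$. More precisely, $\Phi(C)=|\{V\le C\}|$ has derivative $\Phi'(C)=\int_{\{V=C\}}|\nabla V|^{-1}\,d\mathcal H^{d-1}$. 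This is positive and continuous near $C_\infty$, because $V$ has no critical point on that level set (its only critical point is $0$, and $V(0)=0<C_\infty$). Matching
\[
\Phi(C_m)+\varepsilon\int_{\{V\le C_m\}}\log\big(A_m(C_m-V)\big)\,dx+o(\varepsilon)=1
\]
then yields $C_m-C_\infty=-\varepsilon\,\kappa+o(\varepsilon)$ for an explicit constant $\kappa$. In particular $|C_m-C_\infty|\lesssim\varepsilon$. Note that $\kappa$ can have either sign.

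\textbf{Step 2: upper bound for $W_\infty$.} I would transport along the gradient flow lines of $V$, i.e.\ work in the coordinates $(s,\theta)$ where $s=V(x)$. Let $\phi(s)=|\{V\le s\}|$ for $\rho_\infty$ and $\psi(s)=\int_{\{V\le s\}}\rho_m\,dx$ for $\rho_m$. A monotone, level-set-preserving transport is obtained as follows: send the mass of $\rho_m$ lying on level $s$ to the level $\phi^{-1}(\psi(s))$. Within each level, distribute the mass along the flow lines of $\nabla V/|\nabla V|^2$, so that the construction is a reparametrization in $s$ on each flow line.

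The displacement is $|\phi^{-1}(\psi(s))-s|$ measured in level units. It converts into a spatial distance bounded by $1/\min|\nabla V|$, taken over an annulus around $\{V=C_\infty\}$. Away from $0$ this minimum is positive, by compactness of sublevel sets (coercivity) and uniqueness of the critical point.

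The key bound is $\sup_s|\psi(s)-\phi(s)|\lesssim\varepsilon$, together with $\phi'\gtrsim1$ in the relevant range. Near the origin the flow lines are degenerate. I would avoid this problem by leaving mass fixed in a ball around $0$, exploiting $\rho_m\approx1$ there to within $O(\varepsilon)$, and only transporting the level-set mass. The $\log$ singularity near $\{V=C_m\}$ affects only an $O(\varepsilon)$ amount of mass, located at distance $O(\varepsilon)$ from $\{V=C_\infty\}$. It therefore costs $O(\varepsilon)$ in $W_\infty$.

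I expect the main obstacle to be making this construction rigorous. Specifically, one has to build a genuine map whose displacement is uniformly $O(\varepsilon)$ even though $\rho_m$ and $\rho_\infty$ differ by order $\varepsilon$ in the interior, in a non-uniform way, because of the $\log(C_m-V)$ profile. A cleaner alternative is to compare both measures with a $1$D problem via the coarea formula, and then use a Dacorogna--Moser type flow on the annulus.

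\textbf{Step 3: lower bound for $W_1$.} I would use Kantorovich duality with a $1$-Lipschitz test function $f$. If $\kappa\ne0$, i.e.\ the supports differ by order $\varepsilon$, a natural choice is $f=\operatorname{dist}(x,\{V\le C_\infty\})$ or its signed version. Otherwise I would take a test function such as $f=-\operatorname{dist}(x,\{V\ge C_\infty\})$, truncated. This $f$ sees the interior deficit: inside, $\rho_m-1\approx\varepsilon\log(C_m-V)$, which is strictly negative of order $\varepsilon$ near the boundary. That deficit is compensated near the boundary, so it produces a transport of order $\varepsilon$ over distance of order $1$.

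Concretely, $\int f(\rho_m-\rho_\infty)\approx\varepsilon\int f\,(\log(C_\infty-V)+\kappa')$, where the constant $\kappa'$ is fixed by mass balance. This integral is a nonzero multiple of $\varepsilon$ because $f$ is nonconstant on $\{V<C_\infty\}$ while $\log(C_\infty-V)$ is nonconstant too. Hence $W_1\gtrsim\varepsilon$. Checking the nondegeneracy of this constant (choosing $f=\pm(C_\infty-V)_+/\|\nabla V\|_\infty$ makes it an explicit integral) is routine.
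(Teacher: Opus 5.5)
Your Step 1 is fine and is essentially the paper's argument for $|C_m-C_\infty|\lesssim\frac{1}{m-1}$. The gaps are in Steps 2 and 3.

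\textbf{Upper bound.} Your level-set map does not transport $\rho_m$ to $\rho_\infty$. Slide a point along its flow line $\gamma$ of $b=\nabla V/|\nabla V|^2$ from level $s$ to level $s'$. This multiplies the volume element by $\exp\bigl(\int_s^{s'}(\operatorname{div} b)(\gamma(r))\,\d r\bigr)$, which is constant on level sets only when $\operatorname{div} b$ is a function of $V$ (e.g.\ radial $V$).

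So $\tau=\phi^{-1}\circ\psi$ only matches the laws of $V$ under the two measures. Away from $0$, the image of $\rho_m$ has density $1+O(\varepsilon)$ uniformly, even across the free-boundary layer; this is the genuinely useful feature of your map. But the density is not $1$. In general the two measures also give different masses to tubes of flow lines, so no flow-line-wise reparametrization can fix this. A second, tangential correction is indispensable, and you do not provide it.

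Near $0$, leaving the mass in place is not possible. In general $\int_B\rho_m-|B|$ is of order $\varepsilon$, not $0$, and this excess must be redistributed with displacements $O(\varepsilon)$, which needs its own argument.

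Your fallback is to pull the supports together and run Dacorogna--Moser on the fixed domain. This is what the paper does, using the flow of $\chi b$ supported in a collar around $\{V=C_\infty\}$, which is also how it avoids the origin. There you meet the real difficulty: $\tilde\rho_m$ vanishes on $\{V=C_\infty\}$. Hence $\|\tilde\rho_m-1\|_{L^\infty}\not\to0$, only the $L^q$ norms are $O(\varepsilon)$, and you cannot bound the velocity $\nabla u_m/\rho_t$ by $O(\varepsilon)$ up to $t=1$.

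The idea you are missing is to use Dacorogna--Moser only for $W_p$ with $p\le m/2$. In that range the weight $\bigl(\int\tilde\rho_m^{2-p}\bigr)^{1/p}$ stays bounded uniformly in $m,p$, because $\tilde\rho_m^{2-p}\approx(C_\infty-V)^{(2-p)/(m-1)}$ near the boundary with exponent in $(-\tfrac12,0)$. You then upgrade with the Jylh\"a--Rajala inequality $W_\infty\lesssim W_p^{p/(p+d)}$, which at $p=m/2$ costs only a factor $\varepsilon^{-2d/(m+2d)}\to1$.

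\textbf{Lower bound.} The unsigned distance to $\{V\le C_\infty\}$ sees only the mass of $\rho_m$ outside that set. That mass is $O(\varepsilon)$ and lies within distance $O(\varepsilon)$, so this test function gives at best $W_1\gtrsim\varepsilon^2$, and $0$ if $C_m<C_\infty$. A support mismatch of order $\varepsilon$ controls $W_\infty$, not $W_1$.

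Your nondegeneracy argument is a non sequitur, since two nonconstant functions may be uncorrelated. The explicit check you call routine actually fails. For $f=\pm(C_\infty-V)_+/L$, the shift of the free boundary contributes only $O(\varepsilon^2)$, because $f$ vanishes there. At first order the mass balance is carried by that shift, not by an interior constant $\kappa'$. Therefore
\[
\int f\,(\rho_m-\rho_\infty)\,\d x=\pm\frac{\varepsilon}{L}\int_{\{V\le C_\infty\}}(C_\infty-V)\log(C_\infty-V)\,\d x+o(\varepsilon).
\]
This integral vanishes, for instance, when $d=1$, $V=\lambda x^2$ and $\log\lambda=-6\int_{-1/2}^{1/2}(\tfrac14-x^2)\log(\tfrac14-x^2)\,\d x$.

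The fix is your own observation that $\rho_m-1$ is negative of order $\varepsilon$ near the boundary, used with a localized test function as in the paper. Take $\varphi\ge0$, $1$-Lipschitz, nonzero, compactly supported in $\{0<C_\infty-V<\tfrac14\}$. For large $m$ one has $C_m-V\le\tfrac12$ on $\spt\varphi$. Concavity of $z\mapsto z^{1/(m-1)}$ and $\bigl(\frac{m-1}{m}\bigr)^{1/(m-1)}\le1$ then give $\rho_m\le1-\frac{1}{2(m-1)}$ there. Hence $W_1\ge\int\varphi\,(\rho_\infty-\rho_m)\,\d x\ge\frac{1}{2(m-1)}\int\varphi\,\d x$, with no expansion or mass balance needed.
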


Let us first discuss the implication of our result on the literature. Our upper bound on \eqref{eq:main-result} shows, as it was initially conjectured in \cite{DMS24}, that the rate of convergence for stationary states solutions in $W_2$ can indeed be improved to be of polynomial order $m^{-1}$. This does not, however, guarantee that the rate of convergence for non-stationary can be improved to $m^{-1}$. It could happen that, locally in time, only a slower rate is attainable. What our sharpness statement guarantees is that, globally in time, the rate of convergence in $W_2$ cannot be improved further than $m^{-1}$. Thus, it remains open to establish whether this rate is sharp in the 2-Wasserstein metric for non-stationary state solutions both locally and globally in time.

Let us now discuss the assumptions on the potential $V$. As we discussed before, in \cite{DMS24} the authors examined the same question of convergence under the assumption that $\alpha I \leq D^2 V \leq AI$ for some $0 <\alpha \leq A$ where $I$ denotes the identity matrix. Our assumptions that $V \in C^2(\R^d)$, $V$ coercive and $V$ has a unique critical point are weaker than theirs. Our extra assumption that $0$ is such unique global minimizer is chosen cosmetically and of course nothing changes if the minimum point or the minimum value are different. A fine analysis of the proof shows that it would work with almost no modifications also in the case where $0$ is the only {\it critical level} (allowing thus for many global minimizers), provided the Lebesgue measure of such a level (if postive) is strictly smaller than $1$. Actually the only important point is that $C_\infty$ itself is not a critical level, but we prefer to express our assumptions differently, since $C_\infty$ is not necessarily explicitly known. 
Also, the global regularity assumptions we impose on $V$ (namely, $V$ of class $C^2$ on $\R^d$ and with a unique critical point) need not to hold on the entire domain $\R^d$, but it would suffice to assume them only in a bounded region containing the relevant sublevel sets of $V$. This would allow for potentials $V$ with much wilder behavior. However, we do not see any real gains in going to this level of generality, except making the language more cumbersome. Similarly, the coercivity assumption on $V$ is what guarantees that the sublevel sets of $V$, and hence the support of $\rho_m$ and $\rho_\infty$, are compact sets. This could again be weakened to assuming only that the sublevel sets of $V$ in a neighborhood of $C_\infty$ are compact. Of course, one would need to assume separately then that $V$ admits a unique global minimizer. 

Our proof strategy consists in first building a flow $\Psi_m$ that moves $\spt(\rho_\infty)$ to $\spt(\rho_m)$ with unit speed and, among other things, satisfies $\Vert \Psi_m - \operatorname{Id} \Vert_{L^\infty} \lesssim |C_m - C_\infty|$ (\Cref{cor:flow-application}). Then, defining $\tilde \rho_m = (\Psi_m^{-1})_{\#} \rho_m$, we use the Dacorogna--Moser construction to obtain the estimate $W_p(\tilde \rho_m, \rho_\infty) \lesssim \frac{1}{m-1} + |C_m - C_\infty|$ for $1\leq p\leq m/2$, with an implicit constant independent of $m$ and $p$ (\Cref{prop:fixed-support}). By establishing that $|C_m-C_\infty|\lesssim\frac{1}{m-1}$ (\Cref{lem:support-estimate}) and applying the estimate of Jylh\"a and Rajala \cite{RJ16} (which is a refined version of the result by Bouchitt\'e, Jimenez and Rajesh \cite{BJR07}) with $p=m/2$, we obtain $W_\infty(\tilde \rho_m,\rho_\infty)\lesssim\frac{1}{m-1}$ (\Cref{lem:W-infty-rate}). Moreover, we also show that, due to the good properties of $\Psi_m$, $W_\infty(\rho_m, \tilde \rho_m) \lesssim |C_m-C_\infty|$. Finally, we prove the upper bound by the triangle inequality for the metric $W_\infty$ and the monotonicity of the Wasserstein distances. As for the lower bound, we show that $W_1(\rho_m,\rho_\infty)\gtrsim\frac{1}{m-1}$, which gives the lower bound for every $1\leq p\leq+\infty$ by monotonicity.
\section{Preliminary geometric results}

In this section we establish the results that will be used repeated times in the proofs to come. We also establish construction of the flow that will perform the geometric modification of $\rho_m$ to a measure on the support of $\rho_\infty$ in \Cref{cor:flow-application}. This will be the key geometrical step that will allow us to employ the Dacorogna-Moser construction in the next section.

The proofs of the next two results do not contribute to the clarity of the main argument, therefore we postpone them to the \Cref{app:geometric-results}.

\begin{prop}\label{prop:flow} Let $V$ be as in \Cref{thr:main}. For every $C>0$, there exist $\delta>0$, a compactly supported vector field $b \in C_c^{1}(\R^d ; \R^d)$ and a constand $K= K(V,C)$ such that the flow generated by $b$, namely
\begin{equation}\label{eq:flow}
  \begin{cases}
    \partial_s\Psi_s(x) = b(\Psi_s(x)) \\
    \Psi_0(x) = x
  \end{cases}
\end{equation}
is a $C^1(\R^d)$ diffeomorphism for every $s\in \R$, with inverse $\Psi_{-s}$. Moreover, for every $|s| \leq \delta$, we have $\Psi_s(\{V \leq C\}) = \{V\leq C+s\}$, and the following estimate holds:
\begin{equation}\label{eq:flow-estimate}
  \Vert \Psi_s - \operatorname{id} \Vert_{L^\infty(\R^d)} + \Vert \operatorname{det} D\Psi_s - 1 \Vert_{L^\infty(\R^d)} \leq K|s|.
\end{equation} 
\end{prop}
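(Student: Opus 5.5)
Since $V$ is coercive with $0$ as its unique critical point and $V(0)=0$, for $C>0$ the level set $\{V=C\}$ is a compact $C^1$ hypersurface on which $\nabla V\neq 0$. The plan is to use the normalized gradient flow of $V$, cut off away from the minimum, so that the flow time equals the increment of $V$. First, by compactness of $\{V\le 2C\}$ and continuity of $\nabla V$, I fix $\delta\in(0,C/2)$ and a constant $c_0>0$ such that $|\nabla V|\ge c_0$ on the compact annulus $A_{2\delta}:=\{C-2\delta\le V\le C+2\delta\}$; this annulus avoids $0$ because $C-2\delta>0$. Then I choose a cutoff $\eta\in C_c^\infty(\R^d)$ with $\eta\equiv1$ on $A_\delta$ and $\spt\eta\subset \operatorname{int} A_{2\delta}$. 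The natural field would be $b=\eta\,\nabla V/|\nabla V|^2$. The issue is that $V\in C^2$ only gives $\nabla V\in C^1$, so $b$ is indeed $C^1$ with compact support, as required in the statement. Hence $b$ is globally Lipschitz, the flow \eqref{eq:flow} is complete, each $\Psi_s$ is a $C^1$ diffeomorphism, and $\Psi_s^{-1}=\Psi_{-s}$ by the group property.

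The key identity is $\frac{d}{ds}V(\Psi_s(x))=\nabla V\cdot b=\eta(\Psi_s(x))$, which equals $1$ as long as the trajectory stays in $A_\delta$. I then check $\Psi_s(\{V\le C\})=\{V\le C+s\}$ for $|s|\le\delta$. Points with $V(x)<C-\delta$ never enter $A_\delta$ from below within time $\delta$, since $V$ increases at rate at most $1$. Points in $A_\delta$ satisfy $V(\Psi_t x)=V(x)+t$ while they remain in $A_\delta$, and this holds for $|t|\le\delta$ as long as $V(x)+t$ stays in $[C-\delta,C+\delta]$. Outside $\operatorname{spt}\eta$ the flow is stationary. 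Combining these facts, $\Psi_s$ maps $\{V=C\}$ onto $\{V=C+s\}$, and since the sublevel set is preserved on the inside, the claimed equality follows. To make this airtight I would argue with the monotonicity of $t\mapsto V(\Psi_t x)$, which is nondecreasing everywhere because $\eta\ge0$, together with bijectivity applied to $\Psi_{-s}$.

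For the estimate \eqref{eq:flow-estimate}, I use $|\Psi_s(x)-x|\le |s|\,\|b\|_\infty$ with $\|b\|_\infty\le 1/c_0$. For the Jacobian, Liouville's formula gives $\det D\Psi_s(x)=\exp\big(\int_0^s \operatorname{div} b(\Psi_t x)\,dt\big)$, so $|\det D\Psi_s-1|\le e^{\delta\|\operatorname{div}b\|_\infty}\|\operatorname{div} b\|_\infty |s|$. Here $\operatorname{div}b$ is bounded because $b\in C^1_c$; I would compute it from $D^2V$, $\nabla\eta$ and $c_0$. This gives $K$ depending only on $V$ and $C$, through $c_0$, $\|D^2V\|_{L^\infty(A_{2\delta})}$ and the chosen $\delta$.

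The main obstacle is the invariance statement $\Psi_s(\{V\le C\})=\{V\le C+s\}$, specifically the bookkeeping of which trajectories stay inside the region where $\eta=1$. The regularity and the bounds are routine. I expect to handle the invariance cleanly by proving two facts: that $V(\Psi_s x)-V(x)\le s$ for $s\ge0$, and that equality holds whenever $V(x)\in[C-\delta,C]$.
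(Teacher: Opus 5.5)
Your proposal is correct and follows essentially the same route as the paper. Both use the cut-off normalized gradient field $b=\eta\nabla V/|\nabla V|^2$ and the identity $\frac{d}{ds}V(\Psi_s(x))=\eta(\Psi_s(x))\le 1$ for the forward inclusion. Both get the reverse inclusion by a contradiction argument along the backward trajectory $t\mapsto\Psi_{-t}(y)$, which is trapped where the cut-off equals $1$, and both bound the Jacobian with Liouville's formula.

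A few small points in your sketch need fixing:
\begin{itemize}
\item Your two closing ``facts'' do not by themselves give the reverse inclusion. You also need the rate-one identity for trajectories starting in $\{C<V\le C+\delta\}$. You state that identity earlier, so the ingredient is there.
\item The claim that points with $V(x)<C-\delta$ never enter $A_\delta$ is false as written, but you never need it.
\item The case $s<0$ follows by applying the positive case at level $C-|s|$, as the paper does.
\end{itemize}
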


\begin{prop}\label{prop:area-formula} Consider $V$ as in \Cref{thr:main}. Define $A(t) = |\{V \leq t\}|$, then $A\in C^0([0,+\infty))\cap C^1((0,+\infty))$. For $ t \in (0,+\infty)$ we have
    \[A'(t) = \int_{V=t} \frac{1}{|\nabla V|} \d \mathcal H^{d-1}.\]
Moreover, $A$ is strictly increasing on $[0,+\infty)$.
\end{prop}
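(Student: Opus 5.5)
The plan is to establish, for $V$ as in \Cref{thr:main}, the structural facts about sublevel sets first, then the regularity of $A$, and finally strict monotonicity. Since $V$ is coercive and continuous, every sublevel set $\{V\le t\}$ is compact, so $A(t)<\infty$. Since $0$ is the unique critical point and $V(0)=0$ is the minimum (coercive functions attain a minimum at a critical point), we get $V\ge 0$, $\{V\le 0\}=\{0\}$, and $\nabla V\neq 0$ on $\{V=t\}$ for every $t>0$. In particular each level set $\{V=t\}$, $t>0$, is a compact $C^2$ hypersurface and has zero Lebesgue measure.

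For continuity on $[0,\infty)$, I would argue through the distribution function. Right-continuity of $A$ holds because $\{V\le t\}=\bigcap_{s>t}\{V\le s\}$ and the measures are finite. Left-continuity at $t$ amounts to $|\{V=t\}|=0$. For $t>0$ this holds by the hypersurface statement above, and for $t=0$ the set $\{V=0\}=\{0\}$ is a single point, so $A(0)=0$.

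For the derivative on $(0,\infty)$, I would use the coarea formula applied to $\chi_{\{t<V\le t+h\}}$ on the compact region $\{V\le T\}$. Since $|\nabla V|>0$ on $\{V\ge t/2\}\cap\{V\le T\}$, the function $1/|\nabla V|$ is bounded and continuous there, and
\[A(t+h)-A(t)=\int_t^{t+h}\int_{\{V=s\}}\frac{1}{|\nabla V|}\d\mathcal H^{d-1}\d s.\]
It then remains to show that $g(s)=\int_{\{V=s\}}|\nabla V|^{-1}\d\mathcal H^{d-1}$ is continuous on $(0,\infty)$; the fundamental theorem of calculus then gives $A\in C^1$ with $A'=g$. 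This continuity is the main obstacle. The approach I would try first is the gradient-type flow $\Phi_\sigma$ of the field $\nabla V/|\nabla V|^2$, cut off away from $\{V\le t/2\}$. This is exactly the construction behind \Cref{prop:flow}, and it maps $\{V=s\}$ diffeomorphically onto $\{V=s+\sigma\}$ with a $C^1$ dependence on $\sigma$. Pulling the surface integral back to the fixed surface $\{V=s\}$ via the area formula, the integrand becomes a continuous function of $(x,\sigma)$ through the tangential Jacobian and $|\nabla V(\Phi_\sigma(x))|^{-1}$. Continuity of $g$ in $s$ then follows from dominated convergence on a compact surface. An alternative that avoids the tangential Jacobian is to compute $A(s+\sigma)=|\Phi_\sigma(\{V\le s\})|$ directly, differentiate under the integral using $\det D\Phi_\sigma$ and Liouville's formula, and recover $A'(s)=\int_{\{V\le s\}}\operatorname{div}(\nabla V/|\nabla V|^2)$, suitably interpreted near $0$. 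This version is messier because of the singularity at the origin, so I would prefer the coarea route.

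Strict monotonicity follows from $g>0$ on $(0,\infty)$. For $t>0$ the level set $\{V=t\}$ is nonempty: $V$ is continuous, $V(0)=0$, and $V\to\infty$ at infinity, so $V$ takes the value $t$. Being a nonempty $(d-1)$-dimensional $C^1$ hypersurface, it has positive $\mathcal H^{d-1}$ measure, and the integrand $1/|\nabla V|$ is positive. Hence $A'>0$ on $(0,\infty)$. Combined with $A(0)=0<A(t)$ and continuity at $0$, this gives that $A$ is strictly increasing on all of $[0,\infty)$. (For $d=1$, $\mathcal H^0$ is counting measure and the same argument applies.)
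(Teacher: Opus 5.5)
Your proof is correct, but it is organized differently from the paper's. The paper does not use the coarea formula or surface Jacobians at all. It flows the whole sublevel set and writes $A(C+s)=\int_{\{V\le C\}}\det D\Psi_s\d x$, with $\det D\Psi_s=\exp\bigl(\int_0^s(\nabla\cdot b)(\Psi_\tau)\d\tau\bigr)$ by Liouville's formula. Differentiating under the integral sign then gives $A\in C^1$ near $C$ directly. Only afterwards does it convert $A'(C)=\int_{\{V\le C\}}\nabla\cdot b\d x$ into $\int_{\{V=C\}}|\nabla V|^{-1}\d\mathcal H^{d-1}$ by the divergence theorem, using $\chi=1$ on $\{V=C\}$.

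You instead get $A(t+h)-A(t)=\int_t^{t+h}g$ for free from the coarea formula. The real work is then the continuity of $g$, which you obtain from the area formula on $\{V=s\}$ and the joint continuity of the tangential Jacobian of $\Phi_\sigma$. This is valid, since the flow is jointly $C^1$ in $(x,\sigma)$ and the tangent planes $(\nabla V)^\perp$ vary continuously. It yields the formula for $A'$ without any divergence theorem, but it relies on heavier geometric measure theory than the paper needs.

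Your stated reason for discarding the Liouville route is mistaken. That route is exactly the paper's proof, and there is no singularity at the origin. The field is the cut-off $b=\chi\nabla V/|\nabla V|^2$ with $\chi=0$ on $\{V<C-3\delta\}$, so $b\in C^1_c$ and $\nabla\cdot b$ is bounded. One never integrates $\operatorname{div}(\nabla V/|\nabla V|^2)$ near $0$.

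Your treatment of continuity at $0$ and of strict monotonicity matches the paper. You also justify something the paper leaves implicit: that a nonempty $C^1$ level set has positive $\mathcal H^{d-1}$-measure.
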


 \begin{lemma}\label{lem:cm-convergence}
   Let $V$, $\rho_m$ and $\rho_\infty$ be as in \Cref{thr:main}. Then, $C_m \to C_\infty.$
\end{lemma}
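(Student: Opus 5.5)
The proof will rest on the mass constraints $\int \rho_m = 1 = A(C_\infty)$, combined with the monotonicity and continuity of $A$ from \Cref{prop:area-formula}.

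\textbf{Step 1: $C_m$ is bounded above.} Since $V(0)=0$ is the minimum, $C_\infty>0$ by \Cref{prop:area-formula}, because $A(0)=|\{V\le 0\}|=0<1$. Fix $\varepsilon>0$ and suppose $C_m \geq C_\infty+\varepsilon$. On $\{V\le C_\infty+\varepsilon/2\}$ we have $\rho_m \ge \big(\tfrac{m-1}{m}\tfrac{\varepsilon}{2}\big)^{1/(m-1)}$, and this lower bound tends to $1$ as $m\to\infty$. Hence
\[1=\int\rho_m \ge \Big(\tfrac{m-1}{m}\tfrac{\varepsilon}{2}\Big)^{1/(m-1)} A(C_\infty+\varepsilon/2).\]
Strict monotonicity gives $A(C_\infty+\varepsilon/2)>1$, so the right-hand side exceeds $1$ for $m$ large, which is a contradiction. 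Therefore $\limsup C_m\le C_\infty$.

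A uniform a priori bound is also needed. If $C_m \le 1$ for all large $m$, there is nothing to prove. Otherwise, for $C_m$ large, the same inequality with $\varepsilon$ replaced by $C_m - C_\infty$ gives the contradiction directly, since $(\tfrac{m-1}{m}s)^{1/(m-1)}\geq (s/2)^{1/(m-1)}$ and $A$ is increasing. Thus there is no issue.

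\textbf{Step 2: $C_m$ is bounded below.} Suppose $C_m\le C_\infty-\varepsilon$ along a subsequence. Then $\spt\rho_m\subseteq\{V\le C_\infty-\varepsilon\}$, and there
\[\rho_m\le \big(\tfrac{m-1}{m}\,C_m\big)^{1/(m-1)}\le (C_\infty)^{1/(m-1)}\to1.\]
Note that $C_m>0$, since otherwise $\rho_m\equiv0$ a.e. Hence
\[1=\int\rho_m\le C_\infty^{1/(m-1)}A(C_\infty-\varepsilon),\]
and $A(C_\infty-\varepsilon)<1$ by strict monotonicity. For $m$ large the right-hand side is therefore below $1$, again a contradiction. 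Thus $\liminf C_m\ge C_\infty$.

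\textbf{Conclusion and main obstacle.} Combining the two steps gives $C_m\to C_\infty$. The only delicate point is making sure the factors $s^{1/(m-1)}$ converge to $1$ uniformly for $s$ in a fixed compact subset of $(0,\infty)$. This is immediate once $C_m$ is confined to a bounded interval, which Steps 1 and 2 provide. The essential ingredients are the strict monotonicity of $A$ and the identity $A(C_\infty)=1$, the latter following from the continuity of $A$ in \Cref{prop:area-formula}.
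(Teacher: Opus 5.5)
Your argument is correct, but it takes a different route from the paper.

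\textbf{The paper's route.} It writes the mass via the coarea formula as $I_m(C)=K_m\int_0^C (C-s)^{\frac{1}{m-1}}A'(s)\d s$. It then proves $\sup_{C\in[C_0,C_1]}|I_m(C)-A(C)|\lesssim\frac{1}{m-1}$, using the integrability of $|\ln(C-s)|$ against $A'$. Finally it locates $C_m\in[C_0,C_1]$ and reads off $|A(C_m)-1|\lesssim\frac{1}{m-1}$.

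\textbf{Your route.} You squeeze the unit mass between $\left(\frac{m-1}{m}\frac{\varepsilon}{2}\right)^{\frac{1}{m-1}}A(C_\infty+\frac{\varepsilon}{2})$ and $C_\infty^{\frac{1}{m-1}}A(C_\infty-\varepsilon)$. This uses only $V\geq0$, $A(C_\infty)=1$ and strict monotonicity of $A$. It needs no coarea formula and no differentiability of $A$, so for the qualitative statement your proof is more elementary.

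\textbf{What the paper's route buys.} It gives the quantitative bound $|A(C_m)-1|\lesssim\frac{1}{m-1}$. This is reused in \Cref{lem:support-estimate} to obtain $|C_m-C_\infty|\lesssim\frac{1}{m-1}$, and hence the sharp rate. A pointwise lower bound on a single sublevel set, as in your Step 1, would only give $C_m-C_\infty\lesssim\frac{\ln m}{m}$ if quantified. The reason is that it ignores the integrability of the logarithmic singularity near $\{V=C_m\}$, which the coarea computation exploits.

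\textbf{Minor point.} Your paragraph on a ``uniform a priori bound'' and your closing remark about uniform convergence of $s^{\frac{1}{m-1}}$ on compact sets are superfluous. The quantities you raise to the power $\frac{1}{m-1}$, namely $\frac{m-1}{m}\frac{\varepsilon}{2}$ and $C_\infty$, do not depend on $C_m$. So Step 1 already excludes $C_m\geq C_\infty+\varepsilon$ for all $m\geq M(\varepsilon)$, with no bound on $C_m$ needed.
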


\begin{proof} It suffices to show that $A(C_m)\to A(C_\infty)=1$, as $t\mapsto A(t)$ is strictly increasing and continuous by \Cref{prop:area-formula}. Choose constants $C_0$ and $C_1$ such that $0<C_0<C_\infty<C_1$ and define $I_m:\R\to\R_+$ by
\[
I_m(C)=\int_{\R^d}\left(\frac{m-1}{m}(C-V)_+\right)^{\frac{1}{m-1}}\d x.
\]
For ease of notation, let $K_m\coloneq\left(\frac{m-1}{m}\right)^{\frac{1}{m-1}}$. Since $V$ is coercive, it admits a global minimizer. Any global minimizer is a critical point of $V$, and since $0$ is the unique critical point by assumption, it follows that $0$ is the unique minimizer of $V$. As $V(0)=0$, we have $V(x)>0$ for every $x\neq 0$. By the coarea formula and \Cref{prop:area-formula}, we have
\begin{equation}\label{eq:coarea-im}\begin{aligned}
    I_m(C)&=K_m\int_0^C(C-s)^{\frac{1}{m-1}}\left(\int_{\{V=s\}}\frac{1}{|\nabla V|}\d\mathcal H^{d-1}(x)\right)\d s\\ &=K_m\int_0^C(C-s)^{\frac{1}{m-1}}A'(s)\d s.\end{aligned}
\end{equation}
Now we estimate, for $C\in[C_0,C_1]$,
\begin{equation}\label{eq:i-area-estimate}
\begin{aligned}
|I_m(C)-&A(C)|=\left|K_m\int_0^C(C-s)^{\frac{1}{m-1}}A'(s)\d s-\int_0^CA'(s)\d s\right|\\
&\leq K_m\int_0^C\left|(C-s)^{\frac{1}{m-1}}-1\right|A'(s)\d s+|K_m-1|A(C)\\
&\leq \int_0^C\left|(C-s)^{\frac{1}{m-1}}-1\right|A'(s)\d s+|K_m-1|A(C)\\
&\lesssim\frac{1}{m-1}\int_0^C|\ln(C-s)|A'(s)\d s+|K_m-1|A(C).
\end{aligned}
\end{equation}
In passing to the third line we used $0<K_m\leq 1$. For the last inequality, for every $0\leq s<C$, the mean value theorem applied to $r\mapsto (C-s)^r$ gives some $\Theta=\Theta(s,m)\in\left(0,\frac{1}{m-1}\right)$ such that
\[\left|(C-s)^{\frac{1}{m-1}}-1\right|
\leq\frac{1}{m-1}|\ln(C-s)|(C-s)^\Theta.\]
Since $m\geq 2$, we have $0\leq \Theta\leq 1$, and since $0<C-s\leq C_1$, it follows that $(C-s)^\Theta\leq \max\{1,C_1\}.$
Hence, the implicit constant in \eqref{eq:i-area-estimate} is independent of $m$. The logarithmic integral in \eqref{eq:i-area-estimate} is bounded uniformly for $C\in[C_0,C_1]$. Indeed, the singularity at $s=C$ remains uniformly separated from $s=0$, and $A'$ is bounded on $[C_0/2,C_1]$. The claim therefore follows from $A'\in L^1(0,C_1)$ and the integrability of $|\ln u|$ near zero.

Therefore,
\begin{equation}\label{eq:uniform-log-estimate}
    \sup_{C\in[C_0,C_1]}\int_0^C|\ln(C-s)|A'(s)\d s<+\infty.
\end{equation}

It remains to estimate $K_m-1$. We have
\begin{equation}\label{eq:km-estimate}
|K_m-1|\leq \frac{1}{m-1}\left|\ln\left(\frac{m-1}{m}\right)\right|\leq\frac{1}{(m-1)^2}.
\end{equation}
Since $A(C)\leq A(C_1)$ for every $C\in[C_0,C_1]$, it follows, by substituting \eqref{eq:km-estimate} and \eqref{eq:uniform-log-estimate} into \eqref{eq:i-area-estimate} that
\begin{equation}\label{eq:uniform-im-area}
    \sup_{C\in[C_0,C_1]}|I_m(C)-A(C)|\lesssim\frac{1}{m-1}.
\end{equation}

We argue that $C_m\in[C_0,C_1]$ for sufficiently large $m$. Since $C_\infty\in(C_0,C_1)$ and $A$ is strictly increasing, we have $A(C_0)<1<A(C_1)$. By \eqref{eq:uniform-im-area}, for sufficiently large $m$ we also have $I_m(C_0)<1<I_m(C_1)$. Since $I_m$ is continuous and strictly increasing on $[C_0,C_1]$
 and, by the normalization of $\rho_m$, $I_m(C_m)=1$, it follows that $C_m\in[C_0,C_1]$ for sufficiently large $m$. Substituting $C=C_m$ into \eqref{eq:uniform-im-area}, we obtain
\begin{equation}\label{eq:area-cm-estimate}
    |A(C_m)-1|=|A(C_m)-I_m(C_m)|\lesssim\frac{1}{m-1}.
\end{equation}
Since $1=A(C_\infty)$, we conclude that $A(C_m)\to A(C_\infty)$ and, since $A$ is continuous and strictly increasing, this implies that $C_m\to C_\infty$.
\end{proof}

\begin{cor}\label{cor:flow-application} Let $V$, $\rho_m$ and $\rho_\infty$ be as in \Cref{thr:main}. For $m$ sufficiently large, there exists a diffeomorphism $\Psi_m: \R^d \to \R^d$ and constants $\delta,K>0$ such that $\Psi_m(\{V\leq C_\infty\}) = \{V\leq C_m\}$\begin{equation}\label{eq:psi-m-estimate}
    \Vert \Psi_m  - \operatorname{Id} \Vert_{L^\infty(\{V\leq C_\infty\})} + \Vert \operatorname{det} D \Psi_m - 1 \Vert_{L^\infty(\{V\leq C_\infty\})} \leq K|C_m - C_\infty|.
\end{equation}And for $x\in \{C_\infty - \frac{\delta}{3} \leq V \leq C_\infty + \frac{\delta}{3}\}$, we have \begin{equation}\label{eq:collar-identity}C_m - V(\Psi_m(x)) = C_\infty - V(x)\end{equation}
\end{cor}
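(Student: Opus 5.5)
The plan is to apply \Cref{prop:flow} with $C=C_\infty$ and then specialise the flow time. This gives $\delta>0$, a field $b\in C^1_c$ and a constant $K=K(V,C_\infty)$. By \Cref{lem:cm-convergence}, for $m$ large we have $|C_m-C_\infty|\leq\delta$. Set $s_m\coloneq C_m-C_\infty$ and $\Psi_m\coloneq\Psi_{s_m}$. Then $\Psi_m$ is a $C^1$ diffeomorphism with inverse $\Psi_{-s_m}$, and \Cref{prop:flow} gives
\[
\Psi_m(\{V\leq C_\infty\})=\{V\leq C_\infty+s_m\}=\{V\leq C_m\}.
\]
The estimate \eqref{eq:psi-m-estimate} is immediate from \eqref{eq:flow-estimate}, since the $L^\infty(\R^d)$ norm dominates the norm on the sublevel set.

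The real content is the collar identity \eqref{eq:collar-identity}, which needs $V(\Psi_s(x))=V(x)+s$ for $x$ near the level $C_\infty$ and $|s|$ small. Written this way it does not follow from the bare set identity $\Psi_s(\{V\leq C\})=\{V\leq C+s\}$, which is stated only for the single level $C$. I would therefore go into the construction behind \Cref{prop:flow}, which I expect to be the standard one. On a compact collar $N=\{C-\delta_0\leq V\leq C+\delta_0\}$ there is no critical point, because $0$ is the only critical point and $C>0$, so $|\nabla V|\geq c>0$ there. One then takes $b=\eta\,\nabla V/|\nabla V|^2$ with a cutoff $\eta\in C^\infty_c$ equal to $1$ on $N$. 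Wherever the trajectory stays in $N$, the chain rule gives $\frac{\d}{\d s}V(\Psi_s(x))=\nabla V\cdot b=1$.

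With this construction in hand, the argument for \eqref{eq:collar-identity} goes as follows.
\begin{enumerate}
\item Choose $\delta\leq\delta_0/3$; this is presumably why the statement uses $\delta/3$.
\item Take $x$ with $|V(x)-C_\infty|\leq\delta/3$ and $|s|\leq\delta$.
\item Let $\sigma$ be the first exit time of $s\mapsto\Psi_s(x)$ from $N$. Before $\sigma$, $V(\Psi_s(x))=V(x)+s$, so $|V(\Psi_s(x))-C_\infty|\leq\delta/3+|s|<\delta_0$ as long as $|s|\leq\delta$.
\item Hence the trajectory cannot reach $\partial N$ while $|s|\leq\delta$, so $V(\Psi_s(x))=V(x)+s$ holds throughout.
\item Put $s=s_m$ to get $C_m-V(\Psi_m(x))=C_\infty-V(x)$.
\end{enumerate}

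The step I expect to be the main obstacle is that the proof of \Cref{prop:flow} is deferred to the appendix. If its $b$ is not of this form, the identity would have to be deduced differently. One option is to apply \Cref{prop:flow} to every level $C'$ in the collar and use the group property. But the flow produced there depends on $C'$, so a single flow is needed, and the natural way to get one is to fix the gradient-normalised field above. I would therefore state explicitly that the field from \Cref{prop:flow} is taken to be $b=\eta\nabla V/|\nabla V|^2$, with $\delta$ tied to the width of the non-critical collar. This is a genuinely additional property of that field, not a consequence of the statement of \Cref{prop:flow} as written.
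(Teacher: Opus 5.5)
Your proposal is correct and is essentially the paper's proof. You take $\Psi_m=\Psi_{C_m-C_\infty}$ from \Cref{prop:flow} with $C=C_\infty$, read off \eqref{eq:psi-m-estimate} from \eqref{eq:flow-estimate}, and prove \eqref{eq:collar-identity} using the appendix field $b=\chi\nabla V/|\nabla V|^2$ (with $\chi=1$ on the $\delta$-collar) and a first-exit-time argument; the paper likewise invokes the explicit construction rather than the bare statement of \Cref{prop:flow}, differing only in bookkeeping: it keeps the appendix $\delta$ and uses $|C_m-C_\infty|\le\delta/3$, whereas you shrink $\delta$ to $\delta_0/3$.
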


\begin{proof} Let $C_\infty > 0$ play the role of $C$ in \Cref{prop:flow}, then there exists $\delta >0$ and a $C^1(\R^d)$ diffeomorphism, $\Psi_s$, for $|s|\leq \delta$ we have $\Psi_s(\{V\leq C_\infty\}) = \{V\leq C_\infty+s\}$. Choosing $s = C_m - C_\infty$ and denoting $\Psi_m \coloneq \Psi_{C_m - C_\infty}$ we obtain $\Psi_m(\{V \leq C_\infty\}) = \{V\leq C_m\}$. Moreover, for $m$ sufficiently large we have that $|C_m - C_\infty| < \delta$, since $C_m \to C_\infty$ by \Cref{lem:cm-convergence}. Thus, by \eqref{eq:flow-estimate}, estimate \eqref{eq:psi-m-estimate} holds. 

It remains to prove \eqref{eq:collar-identity}. Recall that $\chi=1$ on $\{C_\infty-\delta\leq V\leq C_\infty+\delta\}$ (see \Cref{app:geometric-results}). Since $C_m\to C_\infty$, for $m$ sufficiently large we have $|C_m-C_\infty|\leq\delta/3$. Let $s_m\coloneq C_m-C_\infty$ and fix $x\in\{C_\infty-\delta/3\leq V\leq C_\infty+\delta/3\}$. We claim that the trajectory $t\mapsto\Psi_{ts_m}(x)$ remains in the region where $\chi=1$ for every $t\in[0,1]$. Indeed, otherwise let $t_0$ be its first exit time from $\{C_\infty-\delta<V<C_\infty+\delta\}$. For every $t<t_0$ we have $\chi(\Psi_{ts_m}(x))=1$, and therefore
\begin{equation}\label{eq:pre-exit-identity}
\frac{\d}{\d t}V(\Psi_{ts_m}(x))=s_m\nabla V(\Psi_{ts_m}(x))\cdot b(\Psi_{ts_m}(x))=s_m.
\end{equation}
Hence $V(\Psi_{ts_m}(x))=V(x)+ts_m$ for every $t<t_0$, and consequently
\[
|V(\Psi_{ts_m}(x))-C_\infty|\leq |V(x)-C_\infty|+t|s_m|\leq\frac{2\delta}{3}<\delta,
\]
which contradicts the definition of $t_0$. Thus the trajectory remains entirely in the region where $\chi=1$, and integrating \eqref{eq:pre-exit-identity} on $[0,1]$ and recalling $s_m = C_m - C_\infty$ gives
\[
V(\Psi_m(x))=V(x)+C_m-C_\infty.
\]
Rearranging, we obtain
\[
C_m-V(\Psi_m(x))=C_\infty-V(x)
\]
for every $x\in\{C_\infty-\delta/3\leq V\leq C_\infty+\delta/3\}$.
    
\end{proof}

\section{The case of fixed support.}

In this section we establish the estimate $W_p((\Psi_m)^{-1}_\#\rho_m, \rho_\infty) \lesssim \frac{1}{m-1} + |C_m - C_\infty|$ for every $1\leq p\leq m/2$. As both  $(\Psi_m)^{-1}_\#\rho_m$ and $\rho_\infty$ have support $\{V\leq C_\infty\}$, we may construct a map transporting one to the other via the Dacorogna-Moser construction. Moreover, we can estimate from above the quadratic cost of such a map by $\Vert (\Psi_m)^{-1}_\#\rho_m -   \rho_\infty \Vert_q$ with a constant depending only on the support, and as the support is fixed, such a constant will not depend on $m$. Finally, we estimate $\Vert (\Psi_m)^{-1}_\#\rho_m -   \rho_\infty \Vert_q$. This is the content of the next Proposition.

\begin{prop}\label{prop:l-estimate} Let $V$, $\rho_m$ and $\rho_\infty$ be as in \Cref{thr:main}. Let $\tilde \rho_m = (\Psi_m)^{-1}_{\#} \rho_m$ where $\Psi_m$ is the diffeomorphism from \Cref{cor:flow-application}, then, for $m$ sufficiently large and $1\leq q <+\infty$, the following holds
\begin{equation}
    \Vert \tilde \rho_m - 1\Vert_{L^{q}(\{V\leq C_\infty\})} \lesssim \frac{1}{m-1} + |C_m - C_\infty| 
\end{equation}
\end{prop}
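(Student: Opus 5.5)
We compute $\tilde\rho_m$ explicitly with the change of variables formula. Since $\Psi_m$ is a $C^1$ diffeomorphism mapping $\{V\leq C_\infty\}$ onto $\{V\leq C_m\}=\spt(\rho_m)$, the pushforward $\tilde\rho_m=(\Psi_m^{-1})_\#\rho_m$ is supported in $\{V\leq C_\infty\}$ and has density
\[
\tilde\rho_m(x)=\rho_m(\Psi_m(x))\,\operatorname{det}D\Psi_m(x),\qquad x\in\{V\leq C_\infty\}.
\]
We then split
\[
\tilde\rho_m-1=\rho_m(\Psi_m(x))\bigl(\operatorname{det}D\Psi_m(x)-1\bigr)+\bigl(\rho_m(\Psi_m(x))-1\bigr).
\]
For the first term we use that $\rho_m\leq \bigl(\tfrac{m-1}{m}C_m\bigr)^{1/(m-1)}\leq \max\{1,C_1\}$, which is uniformly bounded since $C_m\in[C_0,C_1]$. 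By \eqref{eq:psi-m-estimate} it is then bounded in $L^\infty$, hence in every $L^q$ of the bounded set $\{V\leq C_\infty\}$, by $\lesssim|C_m-C_\infty|$.

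The second term is the main one. We write $u(x)=C_m-V(\Psi_m(x))>0$ on the interior, so that $\rho_m(\Psi_m(x))=(K_m^{m-1}u)^{1/(m-1)}$ with $K_m^{m-1}=\tfrac{m-1}{m}$. The mean value argument from \Cref{lem:cm-convergence} gives
\[
\bigl|\rho_m(\Psi_m(x))-1\bigr|\leq |K_m-1|\,C+\frac{1}{m-1}\,|\ln u(x)|\,\max\{1,C_1\}.
\]
It therefore suffices to show that $\int_{\{V\leq C_\infty\}}|\ln u|^q\,dx$ is bounded uniformly in $m$. We split the domain. In the collar $\{C_\infty-\delta/3\leq V\leq C_\infty\}$, \eqref{eq:collar-identity} gives $u(x)=C_\infty-V(x)$, which is independent of $m$. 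In the inner region $\{V\leq C_\infty-\delta/3\}$, we have $u\geq C_m-V(\Psi_m(x))$. Since $|\Psi_m-\mathrm{Id}|\lesssim|C_m-C_\infty|$ and $V$ is uniformly continuous on compacts, $V(\Psi_m(x))\leq C_\infty-\delta/6$ for large $m$. Hence $u\geq\delta/12$ there, and also $u\leq C_1$, so $|\ln u|$ is uniformly bounded on this region.

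The main obstacle is integrability of $|\ln(C_\infty-V)|^q$ near the level set $\{V=C_\infty\}$. We handle it with the coarea formula and \Cref{prop:area-formula}:
\[
\int_{\{C_\infty-\delta/3\leq V\leq C_\infty\}}|\ln(C_\infty-V)|^q\,dx=\int_{C_\infty-\delta/3}^{C_\infty}|\ln(C_\infty-s)|^q A'(s)\,ds.
\]
This is finite because $A'$ is bounded on $[C_\infty-\delta/3,C_\infty]$: $A'$ is continuous on $(0,\infty)$, and $C_\infty-\delta/3>0$ after shrinking $\delta$. Moreover $|\ln r|^q$ is integrable near $r=0$ for every finite $q$.

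Combining the pieces gives $\|\tilde\rho_m-1\|_{L^q}\lesssim |C_m-C_\infty|+(m-1)^{-2}+(m-1)^{-1}$, which yields the claim. The implicit constant depends on $q$ through $\bigl(\int|\ln r|^q\bigr)^{1/q}$, which grows like $q$. This is harmless for the statement at fixed $q$, but it should be tracked if the bound is later used with $q\sim m$.
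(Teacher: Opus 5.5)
Your proposal is correct and follows essentially the same route as the paper. Both use the change-of-variables formula for $\tilde\rho_m$, split off the Jacobian and $K_m$ errors (of size $|C_m-C_\infty|$ and $(m-1)^{-2}$), and apply the mean-value bound $|u^{1/(m-1)}-1|\lesssim (m-1)^{-1}|\ln u|$. Both then bound $\int|\ln u|^q$ uniformly in $m$ by splitting into an inner region, where $u$ stays away from $0$, and the collar, where \eqref{eq:collar-identity} and the coarea formula reduce it to $\int|\ln(C_\infty-s)|^q A'(s)\,\d s$.

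The differences are cosmetic: you group $\rho_m\circ\Psi_m$ with $\det D\Psi_m-1$ and use Minkowski instead of the pointwise $2^q$ bound. Your remark that the constant grows with $q$ is accurate, but it is harmless here since the paper only uses the estimate with $q=d+1$.
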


\begin{proof} For ease of notation let $K_m \coloneq ((m-1)/m)^{\frac{1}{m-1}}$, and rewrite the density $\rho_m$  as $\rho_m(x) = K_m(C_m - V(x))_+^{\frac{1}{m-1}}$. The relation $\tilde \rho_m(x) \d x = (\Psi_m^{-1})_{\#} \rho_m \d x$, which is equivalent to
\begin{equation}
    \tilde \rho_m(x) = \rho_m(\Psi_m(x)) \det(D\Psi_m(x)).
\end{equation} By the equation above, we have
\begin{equation}
\begin{aligned}
|&\tilde\rho_m(x)-1|^q
=
\left|
    K_m \det D\Psi_m(x)
    \bigl(C_m-V(\Psi_m(x))\bigr)^{\frac{1}{m-1}}
    -1
\right|^q
\\
&\le
\left(
\begin{aligned}
&K_m \left|\det D\Psi_m(x)\right|
 \left|
    \bigl(C_m-V(\Psi_m(x))\bigr)^{\frac{1}{m-1}}
    -1
 \right|
\\
&\qquad
 +\left|K_m\det D\Psi_m(x)-1\right|
\end{aligned}
\right)^q
\\
&\le
2^{q}
\left(
\begin{aligned}
&(K_m)^q
 \left|\det D\Psi_m(x)\right|^q
 \left|
    \bigl(C_m-V(\Psi_m(x))\bigr)^{\frac{1}{m-1}}
    -1
 \right|^q
\\
&\qquad
 +\left|K_m\det D\Psi_m(x)-1\right|^q
\end{aligned}
\right).
\end{aligned}
\end{equation} and thus
\begin{equation}\label{eq:lp-main}
\begin{aligned}
\|\tilde\rho_m-1\|_{L^q(\Omega_\infty)}^q
&\leq
2^{q}
\Bigg[
(K_m)^q
\|\det D\Psi_m\|_{L^\infty(\Omega_\infty)}^q
\\
&\qquad\quad {}\times
\left\|
    \bigl(C_m-V\circ\Psi_m\bigr)^{\frac{1}{m-1}}-1
\right\|_{L^q(\Omega_\infty)}^q
\\
&\qquad {}
+\left\|
    K_m\det D\Psi_m-1
\right\|_{L^q(\Omega_\infty)}^q
\Bigg].
\end{aligned}
\end{equation}
We will estimate each of the terms and factors in \eqref{eq:lp-main}. Let \[F_1 = (K_m)^q, F_2 = \Vert \det D \Psi_m(x)\Vert^q_{L^\infty}\]and \[T_1 = \Vert (C_m - V(\Psi_m(x)))^{\frac{1}{m-1}} -1 \Vert^q_{L^q(\{V\leq C_\infty\})},\]\[ T_2 =  \Vert K_m \det D \Psi_m(x) - 1 \Vert^q_{L^q(\{V\leq C_\infty\})}.\]

\vspace{0.3cm}

We start with $T_1$:
\begin{equation}\label{eq:t1-reduction}\begin{aligned}
    \Vert (C_m - &V(\Psi_m(x)))^{\frac{1}{m-1}} -1  \Vert^q_{L^q(\{V\leq C_\infty\})}\\ &= \int_{\{V\leq C_\infty\}} \left | (C_m - V(\Psi_m(x)|^{\frac{1}{m-1}} -1 \right |^q \d x \\
    &= \int_{\{V\leq C_\infty\}} \left | \exp\left(\frac{1}{m-1} \ln(C_m - V(\Psi_m(x))\right) -1 \right |^q \d x \\
    & \lesssim \frac{1}{(m-1)^q} \int_{\{V\leq C_\infty\}} |\ln(C_m - V(\Psi_m(x))|^q \d x   \end{aligned}
\end{equation} where, in the last inequality, we used that the exponent $\frac{1}{m-1}\ln(C_m-V(\Psi_m(x)))$ is bounded above uniformly in $m$ and $x$, since $C_m$ is uniformly bounded, and that the exponential function is Lipschitz on any interval bounded above.

We must show that \begin{equation}\label{eq:t1-log-integrability}
    \sup_{m} \int_{\{V\leq C_\infty\}} |\ln(C_m - V(\Psi_m(x))|^q \d x < +\infty.
\end{equation}Let $\delta>0$ be as in \Cref{cor:flow-application}. We may split the integral above into the regions $\{V< C_\infty -\frac{\delta}{3} \}$ and $\{C_\infty - \frac{\delta}{3} \leq V\leq C_\infty\}$.

\begin{equation}\begin{aligned}
    \Vert (C_m - &V(\Psi_m(x)))^{\frac{1}{m-1}} -1 \Vert^q_{L^q(\{V\leq C_\infty\})} \\&\lesssim \frac{1}{(m-1)^q} \bigg(  \underbrace{\int_{\{V\leq C_\infty - \frac{\delta}{3}\}} |\ln(C_m - V(\Psi_m(x))|^q \d x}_{T_{1.1}} \\& +\underbrace{\int_{\{C_\infty - \frac{\delta}{3} \leq V\leq C_\infty\}} |\ln(C_m - V(\Psi_m(x))|^q \d x}_{T_{1.2}}\bigg)
\end{aligned}
\end{equation}
By \eqref{eq:collar-identity} and the co-area formula, if $f(s) = \int_{V=s} |\nabla V|^{-1} \d \mathcal{H}^{d-1}$, then
\begin{equation}
    \begin{aligned}
        T_{1.2} = \int_{\{C_\infty -\frac{\delta}{3}\leq  V \leq C_\infty\}} &|\ln(C_m - V(\Psi_m(x))|^q \d x \\&= \int_{\{C_\infty - \frac{\delta}{3} \leq V\leq C_\infty \}} |\ln(C_\infty - V(x))|^q \d x \\
        &= \int^{C_\infty}_{C_\infty-\frac{\delta}{3} }|\ln(C_\infty - s)|^q f(s) \d s
    \end{aligned}
\end{equation}
fix $\varepsilon >0$ small, splitting away from the singularity we have
\begin{equation}
    \begin{aligned}\label{eq:boundary-log-split}
         \int^{C_\infty - \varepsilon}_{C_\infty-\frac{\delta}{3} }|\ln(C_\infty - s)|^q f(s) \d s +\int^{C_\infty }_{C_\infty - \varepsilon }|\ln(C_\infty - s)|^q f(s) \d s <+\infty
    \end{aligned}
\end{equation}the first integral is finite because $s\mapsto |\ln(C_\infty - s)|^q$ is bounded from above and below on $[C_\infty - \frac{\delta}{3} , C_\infty - \varepsilon]$ and $s\mapsto f(s)$ is integrable on the same interval (see \Cref{prop:area-formula}) also the second integral is finite since $s\mapsto f(s)$ is bounded on $[C_\infty - \varepsilon , C_\infty]$ while $s\mapsto |\ln(C_\infty - s)|^q$ is integrable on the same region. As neither integral in \eqref{eq:boundary-log-split} depends on $m$, $T_{1.2}$ is bounded uniformly in $m$. 

From \Cref{lem:cm-convergence} we have $C_m\to C_\infty$, also from \Cref{cor:flow-application} we have that $\Psi_m \to \operatorname{Id}$ uniformly. Since $V$ is continuous and coercive, the set $\{V\leq C_\infty - \frac{\delta}{3}\}$ is compact and $V$ is uniformly continuous there. We obtain that $C_m - V(\Psi_m(x)) \to C_\infty - V(x)$ uniformly in $x$. As on $\{V\leq C_\infty - \frac{\delta}{3}\}$ we have $C_\infty - V(x) \geq \frac{\delta}{3} > 0$, for $m$ sufficiently large we also have $C_m - V(\Psi_m(x)) \geq \frac{\delta}{3} > 0$, thus $|\ln(C_m - V(\Psi_m(x)))|$ is bounded from above and below uniformly in $x$, and the bounds can be chosen independently of $m$, hence $T_{1.1}$ is bounded uniformly in $m$.

Therefore, from \eqref{eq:t1-reduction} we have  \begin{equation}\label{eq:t1-estimate}
    T_1 = \Vert (C_m - V(\Psi_m(x)))^{\frac{1}{m-1}} -1 \Vert^q_{L^q(\{V\leq C_\infty\})} \lesssim \frac{1}{(m-1)^q}.
\end{equation}

\vspace{0.3cm}

We have already estimated $F_1$ in \eqref{eq:km-estimate} to be 
\begin{equation}\label{eq:km-rate}
    \begin{aligned}
        |K_m - 1| \lesssim \frac{1}{(m-1)^2}.
    \end{aligned}
\end{equation}In particular, $K_m$, and hence $F_1$, is bounded.

\vspace{0.3cm}

By \eqref{eq:psi-m-estimate}, we have $\det D\Psi_m=1+O(|C_m-C_\infty|)$ uniformly on $\{V\le C_\infty\}$. Since $C_m\to C_\infty$ by Lemma~\ref{lem:cm-convergence}, it follows that $F_2$ is uniformly bounded in $m$. Moreover, \eqref{eq:km-rate} gives $K_m=1+O(m^{-2})$, and hence, using $|\{V\le C_\infty\}|=1$,
\begin{equation}\label{eq:t2-estimate}
T_2
=\Vert K_m\det D\Psi_m-1\Vert^q_{L^q(\{V\le C_\infty\})}
\lesssim |C_m-C_\infty|^q+m^{-2q}.
\end{equation}

\vspace{0.3cm}

Returning to \eqref{eq:lp-main} we have, since $(K_m)^q$ and $\Vert \det D\Psi_m(x)\Vert^q_{L^\infty(\{V\leq C_\infty\})}$ are bounded uniformly in $m$, \eqref{eq:t1-estimate} and \eqref{eq:t2-estimate}
\[\Vert \tilde \rho_m - 1 \Vert^q_{L^q(\{V\leq C_\infty\})} \lesssim |C_m - C_\infty|^q + \frac{1}{(m-1)^q}\] and thus
\[\Vert \tilde \rho_m - 1 \Vert_{L^q(\{V\leq C_\infty\})} \lesssim |C_m - C_\infty| + \frac{1}{(m-1)}.\]

\end{proof} 

Define $u_m$ as a solution to the following Neumann problem
\begin{equation}\label{eq:neumann-problem}
    \begin{cases}
        -\Delta u_m  = \tilde \rho_m - \rho_\infty \text{ in } \{V<C_\infty\} \\
        \partial_n u_m = 0 \text{ on } \{V=C_\infty\}.
    \end{cases}
\end{equation} As $\tilde \rho_m\d x$ and $\rho_\infty\d x$ are probability measures we have $\int (\tilde \rho_m -\rho_\infty)\d x = 0$, which guarantees a solution exists (see \cite[Chapter 6, Exercise 3]{Eva22}).

\begin{lemma}\label{lem:neumann-estimate} For $q>d$, a solution $u_m$ of \eqref{eq:neumann-problem}  satisfies the estimate \begin{equation}
    \Vert \nabla u_m  \Vert_{L^\infty(\{V\leq C_\infty\})]} \lesssim \Vert \tilde \rho _m - 1 \Vert_{L^q(\{V\leq C_\infty\})}
\end{equation}
\end{lemma}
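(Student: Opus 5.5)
We plan to prove the estimate through elliptic regularity for the Neumann problem on the fixed domain $\Omega_\infty\coloneq\{V<C_\infty\}$. Since $\Omega_\infty$ does not depend on $m$, every constant produced by the regularity theory will depend only on $\Omega_\infty$, $d$ and $q$, and not on $m$.

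\textbf{Step 1: regularity of the domain.} By \Cref{prop:area-formula} and the hypothesis that $0$ is the only critical point with $C_\infty>0$, we have $\nabla V\neq 0$ on $\{V=C_\infty\}$. Coercivity makes $\{V=C_\infty\}$ compact. Since $V\in C^2$, the implicit function theorem then shows that $\partial\Omega_\infty=\{V=C_\infty\}$ is a compact $C^2$ hypersurface, so $\Omega_\infty$ is a bounded $C^2$ domain. We also check that $\Omega_\infty$ is connected, which is needed for the Neumann problem to be solvable up to constants. Suppose it were not connected. On the closure of a component not containing $0$, $V$ would equal $C_\infty$ on the boundary and be strictly smaller inside. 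It would therefore attain an interior minimum, which is a critical point different from $0$, a contradiction.

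\textbf{Step 2: $W^{2,q}$ estimate.} We normalize $u_m$ to have zero mean on $\Omega_\infty$; this changes neither the equation nor $\nabla u_m$. We then apply the $L^q$ Calder\'on--Zygmund theory for the Neumann problem on bounded $C^{1,1}$ (here $C^2$) domains, as in Agmon--Douglis--Nirenberg or Grisvard. This gives, for $1<q<\infty$,
\[
\|u_m\|_{W^{2,q}(\Omega_\infty)}\lesssim \|\tilde\rho_m-\rho_\infty\|_{L^q(\Omega_\infty)}=\|\tilde\rho_m-1\|_{L^q(\{V\le C_\infty\})},
\]
where the last equality uses $\rho_\infty=1$ on $\Omega_\infty$ and the fact that the level set has measure zero. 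The estimate holds for mean-zero data and mean-zero solutions. Uniqueness modulo constants, which follows from connectedness, lets us drop the usual $\|u_m\|_{L^q}$ term on the right by a standard compactness (Fredholm) argument.

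\textbf{Step 3: Sobolev embedding.} For $q>d$, Morrey's embedding $W^{1,q}(\Omega_\infty)\hookrightarrow C^{0,1-d/q}(\overline{\Omega_\infty})$ holds on the bounded Lipschitz domain $\Omega_\infty$. Applying it to each component of $\nabla u_m\in W^{1,q}$ gives $\|\nabla u_m\|_{L^\infty}\lesssim \|u_m\|_{W^{2,q}}$, and combining with Step 2 yields the claim.

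\textbf{Main obstacle.} The delicate point is Step 2: the $W^{2,q}$ Neumann estimate requires enough boundary regularity and must be quoted in a form valid for every $q>d$. $C^2$ boundaries are sufficient for this. If one also wants the constant to be uniform in $q$, as the later application with $q$ comparable to $m$ might suggest, that would require more care. For this lemma, however, $q$ is fixed and only uniformity in $m$ is needed, which holds because the domain is fixed.
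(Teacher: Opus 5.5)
Your proposal is correct and follows the same route as the paper. Both use the $W^{2,q}$ regularity estimate for the Neumann problem on the fixed bounded connected $C^2$ domain $\{V<C_\infty\}$, then the Morrey embedding for $q>d$, with constants depending only on the domain and $q$ and hence not on $m$. The only difference is that you justify the $C^2$ regularity and connectedness of $\{V<C_\infty\}$ explicitly (via $\nabla V\neq 0$ on $\{V=C_\infty\}$ and the interior-minimum argument), whereas the paper simply asserts them.
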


\begin{proof}
The set $\{V<C_\infty\}$ is a bounded connected domain with $C^2$ boundary. By elliptic regularity for the Neumann problem (see \cite[Theorem~B.2]{BS25}) and the Sobolev embedding for $q>d$, we obtain
\[
\|\nabla u_m\|_{L^\infty(\{V<C_\infty\})}
\lesssim \|\nabla u_m\|_{W^{1,q}(\{V<C_\infty\})}
\lesssim \|\tilde\rho_m-1\|_{L^q(\{V<C_\infty\})}.
\]
The implicit constants depend only on the domain and $q$, and are therefore independent of $m$.
\end{proof}

\begin{prop}\label{prop:fixed-support} Let $V$, $\rho_m$, $\tilde \rho_m$ and $\rho_\infty$ be as in \Cref{prop:l-estimate}. Then, for sufficiently large $m$ and every $1\leq p \leq m/2 $\[W_p(\tilde \rho_m,\rho_\infty) \lesssim \frac{1}{m-1} + |C_m - C_\infty|.\]with the implicit constant independent of $m$ and $p$.
\end{prop}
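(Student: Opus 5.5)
We will use the Dacorogna--Moser construction on the fixed domain $\Omega_\infty\coloneq\{V<C_\infty\}$. Let $u_m$ solve the Neumann problem \eqref{eq:neumann-problem} and set $w\coloneq \nabla u_m$. Interpolate linearly, $\rho_t\coloneq(1-t)\tilde\rho_m+t\rho_\infty$ for $t\in[0,1]$, and take the velocity field $v_t\coloneq w/\rho_t$. Since $-\Delta u_m=\tilde\rho_m-\rho_\infty$, we have $\partial_t\rho_t=\rho_\infty-\tilde\rho_m=\Delta u_m$. Hence $\partial_t\rho_t+\nabla\cdot(\rho_t v_t)=\partial_t\rho_t-\nabla\cdot w=0$ in $\Omega_\infty$, and the Neumann condition gives the no-flux boundary condition. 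By the Benamou--Brenier formula, or simply because the continuity equation represents a curve of measures whose metric derivative is bounded by $\|v_t\|_{L^p(\rho_t)}$, we get
\[W_p(\tilde\rho_m,\rho_\infty)\leq\int_0^1\Big(\int_{\Omega_\infty}|v_t|^p\rho_t\d x\Big)^{1/p}\d t\leq\int_0^1\Big(\int_{\Omega_\infty}\frac{|w|^p}{\rho_t^{p-1}}\d x\Big)^{1/p}\d t.\]
This form avoids needing the flow of $v_t$ to be well defined classically.

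Since $\rho_\infty=1$ on $\Omega_\infty$, we have $\rho_t\geq t$. We also need a lower bound on $\tilde\rho_m$ in order to bound $\rho_t$ from below near $t=0$. This is the main obstacle: $\tilde\rho_m(x)=K_m\det D\Psi_m(x)\,(C_\infty-V(x))^{1/(m-1)}$ near the boundary by \eqref{eq:collar-identity}, and it vanishes at $\{V=C_\infty\}$. We do not bound $\tilde\rho_m$ from below; we instead use $\rho_t\geq t$, which gives
\[\int\frac{|w|^p}{\rho_t^{p-1}}\leq t^{1-p}\|w\|_\infty^p|\Omega_\infty|=t^{1-p}\|w\|_\infty^p .\]
The time integral is then $\|w\|_\infty\int_0^1t^{-(p-1)/p}\d t=p\|w\|_\infty$, which carries a bad factor $p$ that is not uniform. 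To get the $p$-uniform bound we split $\rho_t^{1-p}$ on the set where $\tilde\rho_m\geq 1/2$: there $\rho_t\geq1/2$, and the contribution is $\leq 2^{(p-1)/p}\|w\|_\infty\leq 2\|w\|_\infty$. On the bad set $E\coloneq\{\tilde\rho_m<1/2\}$ we use $\rho_t\geq t$. By Chebyshev and \Cref{prop:l-estimate} with exponent $q$,
\[|E|\leq 2^q\|\tilde\rho_m-1\|_q^q.\]
For $x\in E$ near the boundary, $\tilde\rho_m(x)<1/2$ forces $(C_\infty-V(x))^{1/(m-1)}\lesssim 1/2$, i.e. $C_\infty-V(x)\leq e^{-c(m-1)}$. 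Then the coarea formula and the boundedness of $A'$ near $C_\infty$ give $|E|\lesssim e^{-c(m-1)}$. The bad contribution is therefore at most
\[\|w\|_\infty|E|^{1/p}\int_0^1t^{-(p-1)/p}\d t\lesssim p\,e^{-c(m-1)/p}\|w\|_\infty .\]
For $p\leq m/2$ the factor $(m-1)/p$ is at least about $2$, but $p\,e^{-2c}$ is still not bounded. So the splitting is refined: use the set $\{\tilde\rho_m<\theta\}$ with $\theta=e^{-1}$ and rather $\rho_t\geq\max(t,(1-t)\tilde\rho_m)$. On $\{\tilde\rho_m\geq s\}$ the integrand is bounded by $\|w\|^p_\infty\max(t,(1-t)s)^{1-p}$. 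Layer-cake integration in $s$, using the exponential measure of the superlevel defect sets $|\{\tilde\rho_m<s\}|\lesssim s^{m-1}$ (from $\tilde\rho_m\approx(C_\infty-V)^{1/(m-1)}$ and coarea), gives
\[\int\tilde\rho_m^{1-p}\d x\lesssim\int_0^1 s^{-p}s^{m-2}\d s<\infty \quad\text{uniformly when } p\leq m/2 .\]
This is exactly the source of the restriction $p\leq m/2$. Consequently the inner integral is $\lesssim\|w\|_\infty^p\min(t^{1-p},C(1-t)^{1-p})$, and taking $p$-th roots yields $\int_0^1(\cdot)^{1/p}\d t\lesssim\|w\|_\infty$ with a constant independent of $p$. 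The $p$-th root of the constant $C\lesssim m/(m-2p+?)$ must be checked; we expect at worst $C^{1/p}\leq$ const.

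Finally, by \Cref{lem:neumann-estimate} with a fixed $q>d$ and \Cref{prop:l-estimate}, $\|w\|_\infty\lesssim\|\tilde\rho_m-1\|_{L^q}\lesssim\frac{1}{m-1}+|C_m-C_\infty|$. Since $q$ is fixed, the constant does not depend on $p$, and this gives the claim.
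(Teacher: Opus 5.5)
Your final argument is correct and is essentially the paper's proof. It uses Dacorogna--Moser with the Neumann flux $\nabla u_m$, the lower bound $\tilde\rho_m\geq c\,(C_\infty-V)^{\frac{1}{m-1}}$ in the collar from \eqref{eq:collar-identity}, coarea integrability of the resulting negative power precisely because $p\leq m/2$, and then \Cref{lem:neumann-estimate} with \Cref{prop:l-estimate} for a fixed $q>d$. Your bound $\rho_t\geq\max(t,(1-t)\tilde\rho_m)$ and the metric-derivative form, in place of the paper's exact time integral $\int_0^1\rho_t^{1-p}\d t\leq 1+\tilde\rho_m^{2-p}$, is only a cosmetic variant (the velocity should be $-w/\rho_t$, which is harmless). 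The check you left open is settled as in the paper by the pointwise bound $\tilde\rho_m^{1-p}\leq c^{1-p}(C_m-V\circ\Psi_m)^{-\frac{p-1}{m-1}}$ with $c\leq K_m\det D\Psi_m$ independent of $m$. This gives $\int\tilde\rho_m^{1-p}\d x\leq B\,c^{1-p}$ with $B$ uniform in $m$ and $p$, hence a $p$-th root at most $c^{-1}\max(1,B)$, and it avoids tracking the implicit constant in your layer-cake estimate.
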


 \begin{proof}
It suffices to consider $3\leq p\leq m/2$, since the case $1\leq p<3$ follows by taking $p=3$ and using the monotonicity of the Wasserstein distances.
We do the Dacorogna--Moser interpolation (see \cite[Box 4.3]{San15}) with the flux $w_m:=\nabla u_m$. Set $\rho_t:=(1-t)\rho_\infty+t\tilde \rho_m$ for $t\in[0,1]$. On $\{V\leq C_\infty\}$, this is simply $\rho_t=(1-t)+t\tilde \rho_m$, and the equation $-\Delta u_m=\tilde \rho_m-1$ gives $\partial_t\rho_t+\operatorname{div}w_m=0$, with no boundary flux because $\partial_n u_m=0$ on $\{V= C_\infty \}$. Therefore $(\rho_t,w_m)$ is an admissible pair in the Benamou--Brenier formula, and hence \[ W_p^p(\tilde\rho_m,\rho_\infty) \leq \int_0^1\int_{\{V<C_\infty\}}\frac{|w_m|^p}{\rho_t^{p-1}}\,\d x\,\d t. \] Swapping the order of integration and using that $w_m$ is independent of time gives
\begin{equation}
W_p^p(\tilde\rho_m,\rho_\infty) \leq
\int_{\{V<C_\infty\}} |w_m|^p\int_0^1\frac{1}{\rho_t^{p-1}}\d t\,\d x.
\end{equation}By \Cref{cor:flow-application}, $\Psi_m$ maps the interior of $\{V\leq C_\infty\}$ diffeomorphically into the interior of $\{V\leq C_m\}$, thus $\tilde \rho_m > 0$ on $\{V<C_\infty\}$ and we may compute the integral in time above to obtain
\begin{equation}\label{eq:dm-w2-estimate}\begin{aligned}
W_p^p(\tilde\rho_m,\rho_\infty) &\leq \int_{\{V<C_\infty\}} |w_m|^p\frac{\tilde \rho_m(x)^{2-p}-1}{(p-2)(1-\tilde \rho_m(x))}\d x \\&\leq
\Vert w_m \Vert^p_\infty J_m
\end{aligned}
\end{equation}

where \[ J_m:=\int_{\{V<C_\infty\}}\frac{\tilde \rho_m^{2-p}-1}{(p-2)(1-\tilde \rho_m)}\,\d x. \]
It remains to prove that $J_m^{1/p}$ is bounded uniformly in $m$ and $p$. Since $p\geq3$, \[ 0<\frac{r^{2-p}-1}{(p-2)(1-r)} \leq 1+r^{2-p}, \qquad r>0, \] and $|\{V<C_\infty\}|=1$, we have \[ J_m\leq 1+\int_{\{V<C_\infty\}}\tilde \rho_m^{2-p}\,\d x. \] Recall that $\tilde \rho_m(x) = K_m(C_m - V(\Psi_m(x)))^{\frac{1}{m-1}} \det D \Psi_m(x)$ where we have denoted $K_m \coloneq (\frac{m-1}{m})^{\frac{1}{m-1}}$. Since $K_m$ and $\det D \Psi_m$ are uniformly bounded away from zero for sufficiently large $m$, there exists $0<c\leq1$, independent of $m$, such that $K_m\det D\Psi_m\geq c$ on $\{V<C_\infty\}$. Therefore, \[ \int_{\{V<C_\infty\}}\tilde \rho_m^{2-p}\,\d x \leq c^{2-p}\int_{\{V<C_\infty\}}(C_m-V(\Psi_m(x)))^{\frac{2-p}{m-1}}\,\d x. \] Let $\delta>0$ be as in \Cref{cor:flow-application}. As shown in the proof of \Cref{prop:l-estimate}, $C_m-V(\Psi_m(x))$ is bounded above and below by positive constants uniformly in $m$ on $\{V\leq C_\infty-\frac{\delta}{3}\}$. On the remaining region, \eqref{eq:collar-identity} and the coarea formula show that the integral is bounded uniformly in $m$ and $p$, since $A'$ is bounded near $C_\infty$ and \[ -\frac12<\frac{2-p}{m-1}<0 \] for $3\leq p\leq m/2$. Therefore, there exists $C>0$, independent of $m$ and $p$, such that \[ J_m\leq1+C c^{2-p}\leq(1+C)c^{-p}, \] and hence \[ J_m^{1/p}\leq c^{-1}(1+C)^{1/p}\leq c^{-1}(1+C)^{1/3}. \] By \eqref{eq:dm-w2-estimate}, \Cref{lem:neumann-estimate} and \Cref{prop:l-estimate}, both with exponent $d+1$, we obtain \[ W_p(\tilde \rho_m,\rho_\infty)\leq\Vert w_m\Vert_\infty J_m^{1/p} \lesssim \frac{1}{m-1} + |C_m - C_\infty|. \]

\end{proof}

\section{Main Result}

\begin{lemma}\label{lem:support-estimate} Let $V$, $\rho_m$ and $\rho_\infty$ be as in \Cref{thr:main}. Then, for $m$ sufficiently large,
    \begin{equation}
        |C_m - C_\infty| \lesssim \frac{1}{m-1}.
    \end{equation}
\end{lemma}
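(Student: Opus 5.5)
The plan is to reuse estimate \eqref{eq:area-cm-estimate} from the proof of \Cref{lem:cm-convergence}, namely $|A(C_m)-1|\lesssim\frac{1}{m-1}$, and to turn it into a bound on $|C_m-C_\infty|$ through a quantitative inverse-function argument for $A$ near $C_\infty$. Since $A(C_\infty)=1$, the mean value theorem gives
\[
|A(C_m)-A(C_\infty)| = A'(\xi_m)\,|C_m-C_\infty|
\]
for some $\xi_m$ between $C_m$ and $C_\infty$. This uses $A\in C^1((0,+\infty))$ from \Cref{prop:area-formula}, together with $C_m>0$ for large $m$, which follows from $C_m\to C_\infty>0$ by \Cref{lem:cm-convergence}. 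It therefore suffices to show that $A'$ is bounded below by a positive constant on a neighborhood $[C_\infty-\eta,C_\infty+\eta]$. For $m$ large we have $\xi_m$ in that neighborhood, and then
\[
|C_m-C_\infty|\le \Big(\min_{[C_\infty-\eta,C_\infty+\eta]}A'\Big)^{-1}|A(C_m)-1|\lesssim\frac{1}{m-1}.
\]

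The key step is the lower bound $A'(t)>0$ for $t$ near $C_\infty$. By \Cref{prop:area-formula},
\[
A'(t)=\int_{\{V=t\}}|\nabla V|^{-1}\d\mathcal H^{d-1},
\]
and this is continuous on $(0,+\infty)$. I will show $A'(C_\infty)>0$; continuity then yields a positive minimum on a small compact interval around $C_\infty$.
\begin{itemize}
\item The level set $\{V=C_\infty\}$ is compact, by coercivity, and contains no critical point, since $C_\infty>0=V(0)$. Hence $|\nabla V|\le M$ on it, and $|\nabla V|^{-1}\ge 1/M$.
\item The same level set is a nonempty $C^1$ hypersurface bounding the set $\{V<C_\infty\}$, which has measure $1$. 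It is nonempty because $V$ is continuous, $V(0)=0<C_\infty$, and $V$ is coercive.
\item By the isoperimetric inequality its $\mathcal H^{d-1}$-measure is positive. Alternatively, since it is a nonempty $C^1$ hypersurface, it has positive surface measure locally near any of its points.
\end{itemize}
Combining these, $A'(C_\infty)\ge M^{-1}\mathcal H^{d-1}(\{V=C_\infty\})>0$.

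The main obstacle is to make sure that estimate \eqref{eq:area-cm-estimate}, which was derived inside the proof of \Cref{lem:cm-convergence}, is genuinely available here with a constant independent of $m$. I would restate it explicitly: \eqref{eq:uniform-im-area} holds on $[C_0,C_1]$, and $C_m\in[C_0,C_1]$ for large $m$. Once that is in place, the argument is only the mean value theorem combined with the nondegeneracy of $A'$ at $C_\infty$, which in turn rests on the absence of critical points on $\{V=C_\infty\}$.
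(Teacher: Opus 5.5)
Your proposal is correct and matches the paper's proof: both apply the mean value theorem to $A$ on a neighborhood $[C_\infty-\eta,C_\infty+\eta]$ where $A'\geq c>0$, then invoke \eqref{eq:area-cm-estimate} from the proof of \Cref{lem:cm-convergence}. Your justification that $A'(C_\infty)>0$ is more careful than the paper's: the paper only cites nonemptiness of $\{V=C_\infty\}$ and $\nabla V\neq 0$ in \Cref{prop:area-formula}, while you use that this regular $C^1$ level set has positive $\mathcal H^{d-1}$-measure and that $|\nabla V|$ is bounded on it.
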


\begin{proof} By \Cref{prop:area-formula} we have $A \in C^1((0,+\infty))$ and from $A'(C_\infty) = \int_{V=C_\infty} \frac{1}{|\nabla V|} \d \mathcal H^{d-1} > 0$, there exists constants $c>0$ and $0<\eta <C_\infty$ such that \[A'(t) \geq c >0 \text{ for } [C_\infty - \eta ,C_\infty + \eta].\]

From \Cref{lem:cm-convergence} we have that, for sufficiently large $m$, $C_m \in [C_\infty - \eta , C_\infty + \eta]$. By the mean value Theorem there exists $\xi_m \in  [C_\infty -\eta,C_\infty + \eta]$such that \begin{equation}\label{eq:support-mvt}|A(C_m) - A(C_\infty)|  = A'(\xi_m) |C_m - C_\infty| \geq c |C_m - C_\infty|. \end{equation}

In the proof of \Cref{lem:cm-convergence} we already obtained that $|A(C_m) - A(C_\infty)| \lesssim \frac{1}{(m-1)}$, thus by \eqref{eq:support-mvt} we conclude \[|C_m - C_\infty| \lesssim \frac{1}{m-1}.\]
\end{proof}

For $\mu,\nu \in \P(\R^d)$, the $\infty$-Wasserstein metric (see \cite[Section 3.2]{San15} is defined by 
\[W_\infty (\mu,\nu) \coloneq \inf_{\lambda \in \Pi(\mu,\nu)}\| |x-y| \|_{L^\infty(\lambda)}.\] We recall the following consequence of \cite[Theorem 1.1]{RJ16} for p-power costs.

\begin{thr}[Theorem 1.1 in \cite{RJ16}]\label{thr:l-infty-ineq} Let $\mu\in \P(\R^d)$ have compact and connected support. Define $m(t) \coloneq \inf_{x\in \spt(\mu) } \mu(B(x,t))$. Then, for every $1\leq p <+\infty$
\begin{equation}
    W_p^p(\mu,\nu) \geq  \frac{1}{2}{m\left( \frac{W_\infty(\mu,\nu)}{17} \right)}\left(\frac{W_\infty(\mu,\nu)}{17}\right)^p, 
\end{equation} for every $\nu \in \P(\spt(\mu))$
    
\end{thr}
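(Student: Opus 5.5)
Since this result is imported from \cite{RJ16} and only used as a black box, I only sketch how I would prove it. The strategy is by contradiction. I would show that if only a small amount of mass moves far under an optimal $W_p$ plan, then the plan can be rerouted so that \emph{every} displacement is strictly smaller than $W_\infty(\mu,\nu)$, which is impossible.

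\textbf{Setup.} Set $L\coloneq W_\infty(\mu,\nu)$ and $r\coloneq L/17$, and let $\gamma$ be an optimal plan for $W_p^p(\mu,\nu)$. Call a pair $(x,y)$ \emph{long} if $|x-y|\geq r$ and \emph{short} otherwise. Trivially
\[
W_p^p(\mu,\nu)\geq r^p\,\gamma(\{|x-y|\geq r\}).
\]
So it suffices to prove $\gamma(\{|x-y|\geq r\})\geq \tfrac12 m(r)$. Suppose instead that the long mass is $<\tfrac12 m(r)$. The aim is then to build a plan $\tilde\gamma\in\Pi(\mu,\nu)$ with $\||x-y|\|_{L^\infty(\tilde\gamma)}\leq 16r<L$, contradicting the definition of $L$.

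\textbf{Local capacity.} The basic observation uses both hypotheses: $\spt\nu\subseteq\spt\mu$, and $\mu(B(z,r))\geq m(r)$ for every $z\in\spt\mu$. Together with the smallness assumption, they give the following: in every ball $B(z,r)$ with $z\in\spt\mu$, more than half of the $\mu$-mass (at least $\tfrac12 m(r)$) is moved by $\gamma$ by less than $r$. This short-moving mass is a reservoir available for swapping.

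\textbf{Rerouting.} Take a long pair $(x,y)$. Both endpoints lie in $\spt\mu$, which is compact and connected. Hence $x$ and $y$ can be joined by a chain of points $z_0=x,z_1,\dots,z_N=y$ in $\spt\mu$ with $|z_i-z_{i+1}|<r$. I would then perform a shift along the chain:
\begin{itemize}
\item the long mass leaving $x$ is instead sent to the short targets of mass sitting in $B(z_1,r)$;
\item that mass in turn is sent to the short targets of mass in $B(z_2,r)$, and so on;
\item the last link delivers mass to $y$.
\end{itemize}
Each new displacement is at most (distance to a nearby source) $+$ (chain step) $+$ (short move). This is a bounded multiple of $r$, and some slack is needed because consecutive balls overlap. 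The marginals are preserved because each step is an exchange of equal masses. The capacity $\geq\tfrac12 m(r)$ in each ball exceeds the total long mass, so there is enough room to absorb all of it.

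\textbf{Main obstacle.} The hard step is doing this rerouting \emph{simultaneously} for all the long mass. The chains for different long pairs overlap, so the same reservoir of short mass may be claimed several times. Chains may also be long, and one must avoid displacements accumulating beyond a fixed multiple of $r$. My plan is as follows:
\begin{itemize}
\item first discretize $\mu$ and $\nu$ (finitely many atoms, so that $\gamma$ is a finite matrix) and run the argument combinatorially, in the spirit of the Bouchitté--Jimenez--Rajesh argument \cite{BJR07};
\item organize the shifts greedily, using a Hall-type or max-flow argument on the graph whose nodes are $r$-balls covering $\spt\mu$; the factor $\tfrac12$ in the capacity bound guarantees the needed flow exists;
\item arrange that each transported particle is displaced only once, by a bounded number of hops of size $\lesssim r$; careful bookkeeping of these hops is where a constant such as $16$ (hence $17$) arises;
\item finally, pass to general measures by weak approximation, using stability of $W_p$ and the lower semicontinuity of $W_\infty$.
\end{itemize}
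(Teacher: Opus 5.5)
The paper imports this result from Jylh\"a--Rajala without proof, so your sketch has to stand on its own, and at its central step it does not. Your reduction is correct, and it never uses optimality of $\gamma$: it suffices to show that \emph{every} $\gamma\in\Pi(\mu,\nu)$ satisfies $\gamma(\{|x-y|\ge r\})\ge\frac12 m(r)$. The local capacity observation is also correct.

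\textbf{The gap.} The rerouting is the entire content of the theorem, and you only describe it. Your capacity count compares the short mass in \emph{one} ball, which is $>\frac12 m(r)$, with the \emph{total} long mass, which is $<\frac12 m(r)$. That is enough only if each unit of long mass draws on each region at most once. Nothing in your construction guarantees this. Many chains may cross the same ball. A single chain joining two far-apart points of a connected compact set may come back near the same point many times. Balls of different chains overlap without coinciding. So the total demand on a given reservoir can exceed its capacity. Choosing minimal chains bounds this multiplicity only by a packing constant depending on $d$. That would give a factor depending on $d$ in place of $\frac12$, not the dimension-free constants $\frac12$ and $17$. Thus ``the factor $\frac12$ guarantees the needed flow exists'' and ``each particle is displaced only once'' are exactly the claims left unproved.

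\textbf{The discretization step.} This is not innocent either. Atomic approximations have disconnected supports and need not satisfy $\spt\nu_n\subseteq\spt\mu_n$. Their functions $m_n$ are not controlled by $m$. Lower semicontinuity of $W_\infty$ alone does not let you pass to the limit in a right-hand side involving $m$, which is merely monotone.

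\textbf{The missing idea.} Your ``Hall-type'' instinct is right, but the criterion should be applied globally, to sets, rather than to a graph of balls. By Strassen's theorem (equivalently, Kantorovich duality for the lower semicontinuous cost $\mathbf{1}_{\{|x-y|>R\}}$), $W_\infty(\mu,\nu)\le R$ as soon as $\mu(A)\le\nu(A^R)$ for every closed $A$. Here $A^R=\{z:\operatorname{dist}(z,A)\le R\}$, and it suffices to check nonempty closed $A\subseteq\spt\mu$. The argument then runs as follows.
\begin{enumerate}
\item Fix any $\gamma\in\Pi(\mu,\nu)$ and set $\varepsilon:=\gamma(\{|x-y|\ge r\})$. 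Then $\nu(B^r)\ge\mu(B)-\varepsilon$ for every closed $B$.
\item Either $\spt\nu\subseteq A^{4r}$, in which case $\nu(A^{4r})=1$, or some point of $\spt\nu\subseteq\spt\mu$ lies at distance $>4r$ from $A$.
\item In the second case, connectedness of $\spt\mu$ and the intermediate value theorem for $\operatorname{dist}(\cdot,A)$ give $w\in\spt\mu$ with $\operatorname{dist}(w,A)=2r$.
\item Apply step 1 to $B=A\cup\overline{B(w,r)}$. This is a disjoint union, $B^r\subseteq A^{4r}$, and $\mu(B(w,r))\ge m(r)$. Hence $\nu(A^{4r})\ge\mu(A)+m(r)-\varepsilon$.
\item So $\varepsilon<m(r)$ forces $W_\infty(\mu,\nu)\le 4r$.
\end{enumerate}
Now take $r=W_\infty(\mu,\nu)/17$, assuming $W_\infty>0$ (otherwise there is nothing to prove). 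Since $4r<W_\infty$, step 5 shows every plan has $\gamma(\{|x-y|\ge r\})\ge m(r)$. The stated inequality follows, even without the factor $\frac12$, using no chains, no discretization and no optimality.
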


\begin{lemma}\label{lem:W-infty-rate} Let $V$, $\tilde \rho_m$ and $\rho_\infty$ be as in \Cref{prop:l-estimate}. Then, for sufficiently large $m$, 
\begin{equation}
    W_\infty(\rho_\infty,\tilde \rho_m) \lesssim \frac{1}{m-1}
\end{equation}
\end{lemma}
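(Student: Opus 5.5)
We apply \Cref{thr:l-infty-ineq} with $\mu=\rho_\infty\d x$ and $\nu=\tilde\rho_m\d x$, taking $p=m/2$. First check the hypotheses. The support of $\mu$ is $\Omega_\infty:=\{V\leq C_\infty\}$. It is compact because $V$ is coercive. It is connected: this is implicit in \Cref{lem:neumann-estimate}, and it also follows from the flow picture, since the sublevel sets of a function whose only critical point is the minimum $0$ are connected. Finally $\nu$ is concentrated on $\Omega_\infty$, because $\Psi_m$ maps $\Omega_\infty$ onto $\{V\leq C_m\}=\spt\rho_m$. This gives
\[
\frac12\, m_\infty\!\left(\frac{W_\infty}{17}\right)\left(\frac{W_\infty}{17}\right)^{p}\leq W_p^p(\tilde\rho_m,\rho_\infty),
\qquad W_\infty:=W_\infty(\rho_\infty,\tilde\rho_m),
\]
where $m_\infty(t)=\inf_{x\in\Omega_\infty}|B(x,t)\cap\Omega_\infty|$.

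Next we need a lower bound on $m_\infty$ that is uniform in $m$. Since $\partial\Omega_\infty=\{V=C_\infty\}$ is a compact $C^2$ hypersurface (recall $\nabla V\neq0$ there), $\Omega_\infty$ satisfies a uniform interior cone condition. Hence there are $c_0>0$ and $t_0>0$, depending only on $V$, such that $m_\infty(t)\geq c_0 t^d$ for $t\leq t_0$. Since $\Omega_\infty$ is bounded, we also have $W_\infty\leq \operatorname{diam}\Omega_\infty$. So for $t\geq t_0$ we still get $m_\infty(t)\geq c_0 t_0^d\geq c_0' t^d$ with a constant depending only on the diameter. Therefore $m_\infty(t)\geq c\,t^d$ on the whole relevant range.

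Combining this with \Cref{prop:fixed-support} at $p=m/2$ (allowed, since its constant $C$ is uniform in $p\leq m/2$) and with \Cref{lem:support-estimate}, we get
\[
\frac{c}{2}\left(\frac{W_\infty}{17}\right)^{d+p}\leq \left(\frac{C}{m-1}\right)^{p}.
\]
Taking the power $1/(d+p)$ gives
\[
\frac{W_\infty}{17}\leq \Big(\frac{2}{c}\Big)^{\frac1{d+p}}\Big(\frac{C}{m-1}\Big)^{\frac{p}{p+d}}.
\]
With $p=m/2$ the prefactor $(2/c)^{1/(d+p)}$ stays bounded. The remaining point is that
\[
(m-1)^{-\frac{p}{p+d}}=(m-1)^{-1}(m-1)^{\frac{d}{p+d}},
\qquad
(m-1)^{\frac{d}{m/2+d}}=\exp\!\Big(\frac{d\ln(m-1)}{m/2+d}\Big)\to1.
\]
The same holds for $C^{p/(p+d)}\leq \max(1,C)$. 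Therefore $W_\infty\lesssim (m-1)^{-1}$.

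The main obstacle is the uniformity of constants in this last step. The density lower bound enters raised only to the power $1/p$, so it is harmless. What must be checked carefully is that the constant in \Cref{prop:fixed-support} does not deteriorate as $p=m/2\to\infty$; this was arranged there through the bound $J_m^{1/p}\leq c^{-1}(1+C)^{1/3}$. Without that uniformity, letting $p$ grow with $m$ would lose the sharp rate. The growth of $p$ is exactly what turns the $t^{d}$ loss from $m_\infty$ into a factor $(m-1)^{d/(p+d)}$ that tends to $1$.
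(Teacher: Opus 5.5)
Your proposal is correct and follows the paper's proof: you apply \Cref{thr:l-infty-ineq} with the volume bound $m(t)\geq c\,t^d$ on $\{V\leq C_\infty\}$, insert \Cref{prop:fixed-support} and \Cref{lem:support-estimate} at $p=m/2$, and use that $(m-1)^{2d/(m+2d)}\to 1$. Your additional checks make explicit what the paper states without comment: the connectedness of the sublevel set, that $\tilde\rho_m$ is concentrated on $\{V\leq C_\infty\}$, and that $m(t)\geq c\,t^d$ extends up to the diameter.
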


\begin{proof}
    Since $\{V < C_\infty\}$ is a bounded connected domain with $C^2$ boundary, there exists $c>0$ such that $m(t) \geq ct^d$ for every $0<t< \operatorname{diam}(\{V<C_\infty\})$. Since $\tilde \rho_m \in \P(\spt(\rho_\infty))$, \Cref{thr:l-infty-ineq} gives us that, for every $1\leq p <+\infty$ \begin{equation}\label{eq:W-infty-rhom}
        W_\infty(\rho_\infty,\tilde\rho_m) \leq 17 \left(\frac{2}{c} \right)^{\frac{1}{p+d}} W^{\frac{p}{p+d}}_p(\rho_\infty,\tilde \rho_m).
    \end{equation}The factor $17\left( \frac{2}{c}\right)^{\frac{1}{p+d}}$ is uniformly bounded in $p$, therefore the inequality above can be written with a constant $C>0$ depending only on $d$ and the domain $\{V<C_\infty\}$. 

    Choose $p = m/2$
    , by \Cref{prop:fixed-support} and \Cref{lem:support-estimate} and \Cref{eq:W-infty-rhom}
    \[W_\infty(\rho_\infty,\tilde \rho_m) \lesssim \frac{1}{(m-1)^{\frac{m}{m+2d}}} \lesssim \frac{1}{m-1}\]since $(m-1)^{\frac{2d}{m+2d}}\to 1$ as $m\to\infty$, and thus it is uniformly bounded in $m$.
\end{proof}

Finally, we are ready to prove the main theorem.

\begin{proof}[Proof of \Cref{thr:main}]
    Let $\tilde \rho_m = (\Psi^{-1}_{m})_\# \rho_m$ where $\Psi_{m}$ is the diffeomorphism given by \cref{cor:flow-application}. The triangular inequality gives
    \begin{equation}\label{eq:w2-triangle}
        W_\infty(\rho_m,\rho_\infty) \leq W_\infty(\rho_m,\tilde \rho_m) + W_\infty(\tilde \rho_m, \rho_\infty).
    \end{equation}
Observe that $\Psi_{m}^{-1}$ is a candidate transport map from $\rho_m$ to $\tilde \rho_m$, therefore, by \Cref{cor:flow-application} equation \eqref{eq:flow-estimate} we have 
    \begin{equation}\label{eq:transport-term}
       \begin{aligned}
            W_\infty(\rho_m,\tilde\rho_m)  \leq \Vert \Psi_{m} - \operatorname{Id} \Vert_{L^\infty(\R^d)} \lesssim |C_m - C_\infty|.
       \end{aligned}
    \end{equation}

    By \Cref{lem:W-infty-rate} 
    \begin{equation}\label{eq:fixed-support-term}
        W_\infty(\tilde \rho_m , \rho_\infty) \lesssim \frac{1}{m-1}.
    \end{equation}

    Therefore, plugging \eqref{eq:transport-term} and \eqref{eq:fixed-support-term} into \eqref{eq:w2-triangle} we have, for all sufficiently large $m$ and any $p\in [1,+\infty]$ 

    \begin{equation}
        W_p(\rho_m,\rho_\infty)\leq W_\infty(\rho_m,\rho_\infty) \lesssim \frac{1}{m-1} 
    \end{equation} 

It remains to show that the rate is sharp. Let $\varphi$ be a nonnegative, nonzero, $1$-Lipschitz function compactly supported in $\{0<C_\infty-V<1/4\}$. By \Cref{lem:cm-convergence}, for sufficiently large $m$ we have $(C_m-V(x))_+\leq1/2$ on $\spt(\varphi)$. Since $K_m\leq1$, the concavity of $z\mapsto z^{\frac{1}{m-1}}$ gives \[ \rho_m(x)\leq 1+\frac{(C_m-V(x))_+-1}{m-1}\leq1-\frac{1}{2(m-1)} \] on $\spt(\varphi)$. Since $\rho_\infty=1$ there, the Kantorovich--Rubinstein theorem gives \[ W_1(\rho_\infty,\rho_m)\geq\int_{\R^d}\varphi(x)(\rho_\infty-\rho_m)\d x\geq\frac{1}{2(m-1)}\int_{\R^d}\varphi(x)\d x\gtrsim\frac{1}{m-1}. \] As $\varphi$ is fixed, the implicit constant is independent of $m$. The lower bound for every $p\in[1,+\infty]$ follows by monotonicity of the Wasserstein distances, with a constant independent of $p$.

\end{proof}

\appendix
\section{}\label{app:geometric-results}

\begin{proof}[Proof of \Cref{prop:flow}]
  
Consider $\delta >0$ sufficiently small so that $C-3\delta >0$. Let $\chi$ be a smooth function such that $0\leq \chi \leq 1$ with $\chi$ equal to $1$ on $\{C-\delta \leq V \leq C+\delta\}$ and equal to $0$ outside $\{C-3\delta \leq V \leq C+3\delta\}$. Since $V$ is coercive, the set $\{ V \leq C+3\delta\}$ is compact, thus $\chi$ is compactly supported. Define the vector field \begin{equation}
  b(x) = \begin{cases} \chi(x) \frac{\nabla V(x)}{|\nabla V(x)|^2} \text{ for } x\neq 0 \\ 0 \text{ for } x= 0 \end{cases}
\end{equation} we have that $b$ is compactly supported as $\chi$ is compactly supported. 
Since $V$ is coercive, it admits a global minimizer. Any global minimizer is a critical point of $V$, and since $0$ is the unique critical point by assumption, it follows that $0$ is the unique minimizer of $V$. As $V(0)=0$, we have $V(x)>0$ for every $x\neq 0$. Moreover, by uniqueness of the critical point,
$\nabla V(x)\neq 0\text{ for every }x\neq 0.$
Since $\operatorname{spt}(\chi)\subset \{V\geq C-3\delta\}$
and $C-3\delta>0$, we have $0\notin \operatorname{spt}(\chi)$. Hence $\nabla V\neq 0$ on $\operatorname{spt}(\chi)$.
Since $V\in C^2(\R^d)$ and $\nabla V \neq 0$ on $\spt(\chi)$, we have $b \in C^1_c(\R^d ; \R^d)$.

Consider now the flow generated by $b$ as in \eqref{eq:flow}, by the standard existence and uniqueness results for autonomous ODEs (see \cite{Chi06} Ch. 1) it follows that $\Psi_s$ is a $C^1$-diffeomorphism for every $s \in \R$. Let us show the desired properties of $\Psi_s$: Let us first show that $\Psi_s(\{V\leq C\}) = \{V\leq C+s\}$.

Let $x\in \{V\leq C\}$, computing $\frac{d}{ds} V(\Psi_s(x))$ gives
\begin{equation}\label{eq:potential derivative}
    \begin{aligned}
        \frac{d}{ds} V(\Psi_s(x)) = \frac{\nabla V(\Psi_s(x)) \cdot \nabla V(\Psi_s(x)) }{|\nabla V(\Psi_s(x))|^2}\chi(\Psi_s(x)) = \chi(\Psi_s(x)) \leq 1,
    \end{aligned}
\end{equation}hence integrating \eqref{eq:potential derivative} on $[0,s]$ gives \begin{equation}
    V(\Psi_s(x)) \leq V(x) + s \leq C+s
\end{equation}thus $\Psi_s(x) \in \{V\leq C+s\}$ which shows that $\Psi_s(\{V \leq C\}) \subset \{V\leq C+s\}$. To show the other inclusion, let $y \in \{V \le C+s\}$ and assume, for the sake of contradiction, that $V(\Psi_{-s}(y)) > C$. Consider the backwards flows $t\mapsto \Psi_{-t}(y)$ for $t\in [0,s]$. A similar computation to \eqref{eq:potential derivative} gives
\begin{equation}
    \frac{d}{dt} V(\Psi_{-t}(y)) = - \chi(\Psi_{-t}(y)) \leq 0
\end{equation}thus, the map $t\mapsto V(\Psi_{-t}(y))$ is non-increasing. Thus, from the contradicting assumption and from $y \in \{V\leq C+s\}$ we have \[ C< V(\Psi_{-s}(y))\leq V(\Psi_{-t}(y)) \leq V(\Psi_{0}(y))= V(y) \leq C+s\] which means that the entire curve $t\mapsto \Psi_{-t}(y)$ lies in the region $\{C<V\leq C+\delta\}$ and thus $\frac{d}{dt} V(\Psi_{-t}(y)) = -1$. Integrating this equality on $[0,s]$ leads to
\begin{equation}
    V(\Psi_{-s}(y)) - V(y) = -s
\end{equation} and thus, from $y\in \{V\leq C+s\}$ we get $ V(\Psi_{-s}(y)) = V(y) - s \leq C$ which contradicts $V(\Psi_{-s}(y)) > C$. Hence $\Psi_{s}(\{V\leq C\}) = \{V\leq C+s\}$ holds for $0< s <\delta$.

For the case $-\delta < s < 0$, suffices to note that, from what was previously shown, $\Psi_{r}(\{V\leq C-r\}) = \{V\leq C\}$ for $0<r<\delta$, because $\chi = 1$ on the region $C-r\leq V < C $. From $\Psi^{-1}_{r} = \Psi_{-r}$ we obtain that $\{V\leq C-r\} = \Psi_{-r}(\{V\leq C\})$ and thus choosing $s= -r$ we obtain the result for $-\delta < s < 0$.

Let $0<s<\delta$, the integral form of \eqref{eq:flow} gives \begin{equation}
  \Psi_s (x) - x  = \int_0^s b(\Psi_\tau(x))\d \tau
\end{equation}since $b$ is continuous and compactly supported, $\Vert b \Vert_\infty < +\infty$ and thus
\begin{equation}\label{eq:flow-displacement}
  |\Psi_s - x| \leq \Vert b \Vert_\infty s 
\end{equation}
and for $-\delta < s <0$ the same inequality follows by observing that $\Psi_{-s} = \Psi^{-1}_s$, thus \begin{equation}
  \Vert \Psi_s - \operatorname{Id} \Vert_\infty \leq \Vert b \Vert_\infty|s| 
\end{equation}.

Differentiating \eqref{eq:flow}  we obtain
\begin{equation}\label{eq:flow-derivative}
    \frac{d}{ds} D \Psi_s(x) = D [b(\Psi_s(x))] = Db(\Psi_s(x)) D\Psi_s(x),
\end{equation} then, byJacobi's formula we have, since $D\Psi_s(x)$ is invertible, 
\begin{equation}\label{eq:jacobian-derivative}\begin{aligned}
    \frac{d}{ds}  \operatorname{det}(D\Psi_s(x)) &= \operatorname{det}(D\Psi_s(x)) \operatorname{tr}([D\Psi_s(x)]^{-1} \frac{d}{ds} D\Psi_s(x)) \\
    &=\operatorname{det}(D\Psi_s(x)) \operatorname{tr}([D\Psi_s(x)]^{-1}Db(\Psi_s(x)) D\Psi_s(x) ) \\
    &=\operatorname{det}(D\Psi_s(x)) \operatorname{tr}(Db(\Psi_s(x))  ) \\
    &= \operatorname{det}(D\Psi_s(x)) \nabla \cdot (b(\Psi_s(x)))
    \end{aligned}
\end{equation}thus, the function $(x,s) \mapsto \operatorname{det} (D\Psi_s(x))$ solves the following ODE
\begin{equation}
    \begin{cases}
         \frac{d}{ds}  \operatorname{det}(D\Psi_s(x)) = (\nabla \cdot b)(\Psi_s(x))\operatorname{det}(D\Psi_s(x)) \\
         \operatorname{det}(D\Psi_0(x))  = 1
    \end{cases}
\end{equation} therefore, we obtain
\begin{equation}\label{eq:jacobian-formula}
    \operatorname{det}(D\Psi_s(x)) = \exp\left(\int_0^s(\nabla \cdot b)(\Psi_\tau(x))\d \tau\right).
\end{equation} Using the standard estimate $|e^{a} - 1| \leq e^{|a|}|a|$ we obtain
\begin{equation}\begin{aligned}\label{eq:jacobian-estimate}
    \left | \operatorname{det}(D\Psi_s(x)) - 1 \right| &\leq  \exp\left(\int_0^s  |(\nabla \cdot b)(\Psi_\tau(x))| \d\tau \right)  \int_0^s| (\nabla \cdot b)(\Psi_\tau(x)) |\d \tau\\
    &\leq  \exp(\Vert \nabla \cdot b \Vert_\infty |s|) \Vert\nabla \cdot b\Vert_\infty \,  |s| \\
    &\leq  \exp(\Vert \nabla \cdot b \Vert_\infty \delta) \Vert\nabla \cdot b\Vert_\infty \,  |s| 
\end{aligned} 
\end{equation}

Thus, choosing $K\coloneq 2\max(\Vert b \Vert_\infty,  \exp(\Vert\nabla \cdot b \Vert_\infty \delta) \Vert \nabla \cdot b\Vert_\infty \,)$ and combining \eqref{eq:flow-displacement} and \eqref{eq:jacobian-estimate} we obtain \eqref{eq:flow-estimate}
\end{proof} 

\begin{proof}[Proof of \Cref{prop:area-formula}]

Let $C>0$, and let $b$, $\delta$ and $\Psi_s$ be as in the proof of \Cref{prop:flow}. Thus, we have $\Psi_s(\{V\leq C\}) = \{V\leq C+s\}$, hence $|\Psi_s(\{V\leq C\})| = A(C+s)$. By the change of variables we have \[A(C+s) = \int_{\{V\leq C\}} \det D \Psi_s(x) \d x.\]By \eqref{eq:jacobian-formula}, for $|s| < \delta$
\[A(C+s) = \int_{\{V\leq C\}}\underbrace{\exp\left( \int_0^s (\nabla \cdot b)(\Psi_\tau(x)) \d \tau\right)}_{g(x,s)} \d x.\] Since the function $(x,s) \mapsto g(x,s)$ is continuous, $(x,s)\mapsto \partial_sg(x, s)$ is continuous and bounded on $\{V\leq C\} \times [-\delta, \delta]$, we may differentiate under the integral sign to obtain \[\frac{d}{ds} A(C+s) = \int_{V\leq C} (\nabla \cdot b)(\Psi_s(x))\exp\left( \int_0^s (\nabla \cdot b)(\Psi_\tau(x)) \d \tau\right) \d x.\] Now we show that $s \mapsto \frac{d}{ds} A(C+s)$ is continuous: Let $s_n \to s$, then by continuity we have \[
\begin{aligned}
&(\nabla \cdot b)(\Psi_{s_n}(x))
\exp\left(
    \int_0^{s_n}
    (\nabla \cdot b)(\Psi_\tau(x))
    \d \tau
\right)
\\
&\qquad \longrightarrow
(\nabla \cdot b)(\Psi_s(x))
\exp\left(
    \int_0^s
    (\nabla \cdot b)(\Psi_\tau(x))
    \d \tau
\right).
\end{aligned}
\] and \[\left | (\nabla \cdot b)(\Psi_{s_n}(x))\exp\left( \int_0^{s_n} (\nabla \cdot b)(\Psi_\tau(x)) \d \tau\right) \right | \leq \Vert \nabla \cdot  b \Vert_\infty \exp(\Vert \nabla \cdot b \Vert_\infty  \delta)\]hence, continuity of $s\mapsto \frac{d}{ds} A(C+s)$ follows by dominated convergence. We thus obtain that $A$ is of class $C^1$ in a neighbourhood of $C$, and since $C>0$ was arbitrarily taken, we have $A\in C^1((0,+\infty))$.

We still must show that $A$ is continuous at $ 0$. From the assumptions on $V$ it follows that the unique minimizer of $V$ is $x=0$, thus $A(0) = |\{V\leq0\}| = 0$. Let $t_n$ be a decreasing sequence converging to zero, then \[\lim_{t_n \to 0} A(t_n) = \lim_{t_n \to 0} |\{V\leq t_n\}| = \left| \cap_{n} \{ V\leq t_n \} \right| = |\{V=0\}| = 0 = A(0).\]

By \eqref{eq:jacobian-derivative} we obtain \[\begin{aligned}
     A
'(C) &= \frac{d}{ds} A(C+s) \bigg \vert_{s=0} \\& = \int_{\{V\leq C\}} \frac{\d}{\d s} \det D\Psi_s(x) \bigg \vert_{s=0} \d x \\&= \int_{\{V\leq C\}} \nabla \cdot b \,  \d x,
\end{aligned} \] and using the divergence theorem, observering that the outward unit normal vector on the set $\{V=C\}$ is $\frac{\nabla V}{|\nabla V|}$, we have \[A'(C) = \int_{\{V=C\}} b \cdot \frac{\nabla V}{|\nabla V|} \d \mathcal H^{d-1} = \int_{\{V = C\}} \frac{1}{|\nabla V|}\d \mathcal H^{d-1}.\]

Finally, for every $t>0$ we have that $\{V=t\}\neq \emptyset$ and $\nabla V \neq 0$ on $\{V=t\}$, thus, \[A'(t) = \int_{\{V=t\}} \frac{1}{|\nabla V|} \d \mathcal{H}^{d-1} > 0,\] which implies that $A$ is strictly increasing on $(0,+\infty)$. Since $A(0) = 0 < A(t)$ for every $t>0$, $A$ is actually strictly increasing on the larger set $[0,+\infty)$.
\end{proof}

\section*{Acknowledgements}

This work was supported by the European Union via the ERC AdG 101054420 EYAWKAJKOS project.

\section*{AI statement}

No AI has been used in this work.

\bibliographystyle{abbrv}
\bibliography{references}

\end{document}